\DeclareMathAlphabet{\mathcal}{OMS}{cmsy}{m}{n}
\newcommand{\mon}[1]{\textnormal{Mon}(#1)}
\newcommand{\aut}[1]{\textnormal{Aut}(#1)}
\newcommand{\txn}[1]{\textnormal{#1}}
\newcommand{\rsphere}{\mathbb{C}\mathbb{P}^1}
\newcommand{\algbr}{\overline{\mathbb{Q}}}
\newcommand{\absgal}{\txn{Gal}(\algbr | \mathbb{Q})}
\newcommand{\inv}[1]{#1^{-1}}
\newcommand{\gal}[1]{\textnormal{Gal}(#1 | \mathbb Q)}
\newcommand{\per}{(\sigma,\alpha,\varphi)}
\newcommand{\interval}[1]{\left\langle #1 \right\rangle}
\newcommand*{\symmdiff}{\bigtriangleup}
\newcommand{\belyi}{Bely\u{\i}\xspace}
\newcommand{\map}{\mathcal M}
\newtheorem{thm}{Theorem}[section]
\newtheorem{cor}[thm]{Corollary}
\newtheorem{exmp}{Example}[section]
\newtheorem{prop}{Proposition}[section]
\newtheorem{conv}{Convention}[section]
\newtheorem{invar}{Invariant}[section]
\newtheorem{dfn}{Definition}[section]
\newtheorem{rmrk}{Remark}[section]
\newtheorem{qst}{Question}[section]
\begin{document}

\title*{Dessins, their delta-matroids and partial duals}
\author{Goran Mali\'{c}}
\institute{Goran Mali\'{c} \at School of Mathematics, University of Manchester, Oxford Road, Manchester M13 9PL, \email{goran.malic@manchester.ac.uk}}
%
%
\maketitle

\abstract*{Given a map $\map$ on a connected and closed orientable surface, the delta-matroid of $\map$ is a combinatorial object associated to $\map$ which captures some topological information of the embedding. We explore how delta-matroids associated to dessins behave under the action of the absolute Galois group. Twists of delta-matroids are considered as well; they correspond to the recently introduced operation of partial duality of maps. Furthermore, we prove that every map has a partial dual defined over its field of moduli. A relationship between dessins, partial duals and tropical curves arising from the cartography groups of dessins is observed as well.}

\abstract{Given a map $\map$ on a connected and closed orientable surface, the delta-matroid of $\map$ is a combinatorial object associated to $\map$ which captures some topological information of the embedding. We explore how delta-matroids associated to dessins behave under the action of the absolute Galois group. Twists of delta-matroids are considered as well; they correspond to the recently introduced operation of partial duality of maps. Furthermore, we prove that every map has a partial dual defined over its field of moduli. A relationship between dessins, partial duals and tropical curves arising from the cartography groups of dessins is observed as well.}

\section{Introduction}

A \emph{map} on a connected and orientable closed surface $X$ is a cellular embedding of a connected graph $G$ (loops and multiple edges are allowed). By this we mean that the vertices of $G$ are distinguished points of the surface, and the edges are open 1-cells drawn on the surface so that their closures meet only at the vertices; furthermore, the removal of the all the vertices and all the edges from the surface decomposes the surface into a union of open 2-cells, which are called the \emph{faces} of the map.
\begin{figure}[ht]
\sidecaption
\centering
\includegraphics[scale=.4,trim={2cm 17cm 2cm 6cm}]{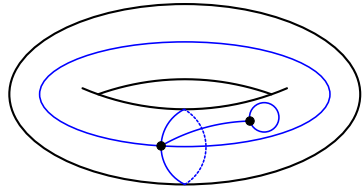}
\caption{A map with 2 vertices, 4 edges, and 2 faces on a genus 1 surface.}
\end{figure}

To every map on $X$ a \emph{clean dessin d'enfant} corresponds. A clean dessin d'enfant is a pair $(X,f)$ where $X$ is a compact Riemann surface (or, equivalently, an algebraic curve) defined over $\mathbb C$ and $f\colon X\to\rsphere$ is a holomorphic ramified covering of the Riemann sphere, ramified at most over a subset of $\{0,1,\infty\}$, with ramification orders over $1$ all equal to 2. Vertices of the map correspond to the points in the fiber above 0, whilst the preimages $\inv f(\left\langle 0,1\right\rangle)$ of the open unit interval, glued together at the fiber above 1, form the edges.

The following theorem of \belyi \cite{belyi80, belyi02} is considered as the starting point of the theory of dessins d'enfants.
\begin{theorem}[\belyi]Let $X$ be an algebraic curve defined over $\mathbb C$. Then $X$ is defined over the field $\algbr$ of algebraic numbers if, and only if there is a holomorphic ramified covering $f\colon X\to\rsphere$ of the Riemann sphere, ramified at most over a subset of $\{0,1,\infty\}$.
\end{theorem}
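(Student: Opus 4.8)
The statement is a biconditional and the plan is to prove the two implications by completely different methods: the implication that a \belyi map exists once $X$ is defined over $\algbr$ by the explicit construction due to Belyĭ, and the converse implication, that the mere existence of such a covering forces $X$ to be defined over $\algbr$, by a rigidity-and-descent argument.

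I would start with the descent direction, exploiting the fact that coverings of $\rsphere$ unramified outside $\{0,1,\infty\}$ have no moduli. First I would invoke the Riemann existence theorem: a connected covering of degree $n$, unramified outside $\{0,1,\infty\}$, is determined up to isomorphism by the simultaneous conjugacy class of a transitive pair of permutations $(\sigma_0,\sigma_1)$ in the symmetric group on $n$ letters, with $\sigma_\infty=\inv{(\sigma_0\sigma_1)}$; this is a finite combinatorial datum, so for each $n$ there are only finitely many such coverings. Since the branch set $\{0,1,\infty\}$ is defined over $\mathbb Q$ and hence stable under $\absgal$, the heuristic is that the Galois orbit of $(X,f)$ is finite, so that a finite-index subgroup of $\absgal$ stabilizes it and cuts out a number field of definition. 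Making this rigorous is precisely the delicate point, to which I return below.

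Next the construction. I would begin with any nonconstant morphism $f_0\colon X\to\rsphere$ defined over $\algbr$, which exists because $X$ is; its critical values form a finite set $S\subset\rsphere(\algbr)$. The first phase forces all branch points to become rational by postcomposing with polynomials over $\mathbb Q$. I would induct on $d=\max\{\deg_{\mathbb Q}\beta:\beta\in S\}$: if $d>1$, choose $\alpha\in S$ of degree $d$, let $m\in\mathbb Q[x]$ be its minimal polynomial, and replace $f_0$ by $m\circ f_0$. The new branch points lie in $m(S)\cup\{\txn{critical values of }m\}$; all conjugates of $\alpha$ map to $0$, any other branch point $t$ has $\deg_{\mathbb Q}m(t)\le\deg_{\mathbb Q}t$, and each critical value $m(\gamma)$ with $m'(\gamma)=0$ has degree at most $d-1$ because $\gamma$ is a root of the degree-$(d-1)$ polynomial $m'$. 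Hence the number of degree-$d$ branch points strictly drops, and after finitely many steps $d$ reaches $1$. Once $S\subset\rsphere(\mathbb Q)$ is finite, a $\mathbb Q$-rational Möbius transformation lets me assume $0,1,\infty\in S$ and that a further branch point equals $\lambda=m/(m+n)\in(0,1)$ with $m,n$ positive integers; postcomposing with the \belyi polynomial
\[
\beta_{m,n}(x)=\frac{(m+n)^{m+n}}{m^m\,n^n}\,x^m(1-x)^n,
\]
whose only critical values are $0,1,\infty$ and which sends $\{0,1,\infty,\lambda\}$ into $\{0,1,\infty\}$, produces a composite whose branch set is contained in $\{0,1,\infty\}\cup\beta_{m,n}(S)$ and hence strictly smaller. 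Iterating collapses the branch locus to $\{0,1,\infty\}$, and the resulting composite, defined over $\algbr$, is the desired \belyi map.

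Finally, I would locate the real difficulty. The construction is elementary once the polynomial $\beta_{m,n}$ is in hand; the only thing to check is that each postcomposition strictly decreases the chosen complexity measure, which is routine bookkeeping. The genuine obstacle is the descent: asserting that a covering given a priori over $\mathbb C$ is already defined over $\algbr$ cannot quote a Galois action on $(X,f)$ without circularity, so I would instead spread $X$ and $f$ out over a finitely generated $\mathbb Q$-subalgebra of $\mathbb C$ and use the rigidity of covers of the thrice-punctured line to conclude that the geometric fibers are all isomorphic, whence $X$ is isomorphic to a curve over $\algbr$; equivalently one invokes the comparison between the topological and \'etale fundamental groups of $\rsphere\setminus\{0,1,\infty\}$, which shows that base change from $\algbr$ to $\mathbb C$ is an equivalence on finite covers. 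Getting this descent step clean is the crux.
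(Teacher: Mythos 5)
First, a point of comparison: the paper does not actually prove this theorem. It states it twice and refers for proofs to \belyi's own papers, to \cite[theorem 4.7.6]{szamuely09}, to \cite[chapter 3]{girondo_gonzalez-diez} and to \cite{goldring14}, so there is no in-paper argument to measure you against; your proposal must be judged on its own terms, and on those terms it is a correct outline of the classical proof. The constructive direction is in good shape. Your double induction --- first eliminating branch points of maximal degree $d$ over $\mathbb Q$ by postcomposing with the minimal polynomial $m$ of a chosen degree-$d$ branch point $\alpha$ (conjugates of $\alpha$ collapse to $0$; $m(t)\in\mathbb Q(t)$ so degrees cannot rise; new critical values $m(\gamma)$ have degree at most $d-1$ because $\gamma$ is a root of $m'$), then shrinking the now-rational branch set with the polynomial $\beta_{m,n}$, whose critical values lie in $\{0,1,\infty\}$ and which maps $\{0,1,\lambda,\infty\}$ into $\{0,1,\infty\}$ --- is exactly \belyi's algorithm, and the termination bookkeeping (the number of degree-$d$ points strictly drops, then the cardinality of the rational branch set strictly drops) is sound. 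The one detail you gloss is the normalisation $\lambda\in(0,1)$: a $\mathbb Q$-rational M\"obius transformation gives you a rational fourth point $\lambda\notin\{0,1,\infty\}$, and you must exploit the six-fold cross-ratio ambiguity (replace $\lambda$ by $1/\lambda$ if $\lambda>1$, by $1/(1-\lambda)$ if $\lambda<0$) to land in $(0,1)$; this is routine but should be said.

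For the converse you correctly refuse the circular ``finite Galois orbit'' heuristic --- one cannot act by $\absgal$ on $(X,f)$ before knowing $(X,f)$ is defined over $\algbr$ --- and you name the two standard repairs: spreading $X$ and $f$ out over a finitely generated $\mathbb Q$-subalgebra of $\mathbb C$ and using rigidity of covers of $\rsphere\setminus\{0,1,\infty\}$, or invoking the equivalence, via comparison of topological and \'etale fundamental groups, between finite covers over $\algbr$ and over $\mathbb C$. This is precisely the route of the sources the paper cites (\cite[theorem 4.7.6]{szamuely09}; see also \cite{wolfart97}, whose title calls this the ``obvious'' part), and it has the pleasant by-product, which the paper uses immediately after the theorem, that the map $f$ and not merely the curve $X$ descends to $\algbr$. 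As written, however, this half of your proposal remains a named strategy rather than a proof: to close it you must actually execute the descent --- finiteness of covers of bounded degree via the Riemann existence theorem, constancy of the isomorphism class along the geometric fibers of the spread-out family, and descent of the resulting isomorphism to a number field (equivalently, the invariance of the \'etale fundamental group under extension of algebraically closed fields of characteristic zero). You have identified the right tools and located the difficulty honestly, so the deficiency is one of detail rather than of idea, but a referee would insist that this step be carried out before calling the biconditional proved.
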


As a direct consequence, given any dessin $(X,f)$, both the algebraic curve $X$ and the covering map $f$ are defined over $\algbr$ and therefore the absolute Galois group $\absgal$ acts naturally on both. One of the major themes of the theory of dessin d'enfants is the identification of combinatorial, topological or geometric properties of dessins which remain invariant under the aforementioned action. We will call such invariants \emph{Galois invariants}. A number of Galois invariants have been documented and an incomplete list can be found in section \ref{subsection: invariants} of this paper or in \cite[ch. 2.4.2.2]{lando_zvonkin}.

A \emph{delta-matroid} is a combinatorial object associated to a map $\map$ on a surface $X$ which records a certain independence structure. It is completely determined by the spanning \emph{quasi-trees} of $\map$, that is the spanning sub-graphs of the underlying graph of $\map$ which can be embedded as a map with precisely one face in some surface, not necessarily the same one as $X$. We will study the behaviour of the delta-matroid of a clean dessin under the action of $\absgal$; the main conclusion is that the delta-matroid itself is not Galois invariant, however further consideration suggests that the self-dual property of delta-matroids might be, and in some cases is, preserved by the action.

A \emph{partial dual} of a map with respect to some subset of its edges is an operation which generalises the geometric dual of a map. It was recently introduced in \cite{chmutov09} and generalised to hypermaps in \cite{chmutov14}. It was shown in \cite{moffat14} that the delta-matroids of partial duals of a map $\map$ correspond to the \emph{twists} of the delta-matroid of $\map$. We give a proof of this correspondence without invoking the machinery of ribbon graphs used in \cite{moffat14} and use it to show that a map always has a partial dual defined over its field of moduli.

Towards the end of the paper we discuss the connection between maps, partial duals, and tropical curves. An \emph{abstract tropical curve} is a connected graph without vertices of degree 2 and with edges decorated by the set of positive reals and $\infty$. We associate a tropical curve to a map via the \emph{monodromy graph} of a map. The vertices of these graphs correspond to the partial duals of the map and the tropical curves obtained in this way show some similarities with maps when considering the action of $\absgal$ on them. For example, the number of vertices, edges and the genus of tropical curves remains invariant under the action of $\absgal$.

The paper is structured as follows. In section \ref{section: dessins} we define (not just clean) dessins d'enfants, describe the correspondence between dessins and bipartite maps and give a permutation representation.

In section \ref{section: Belyi} we revisit \belyi's theorem and go into more detail about the action of $\absgal$ on dessins. Some Galois invariants are described in subsection \ref{subsection: invariants} as well.

In section \ref{section: matroids} we introduce matroids and delta-matroids and describe how they arise from maps on surfaces.

In section \ref{section: Galois action on matroids} we discuss the behaviour of delta-matroids of maps when the maps are acted upon by $\absgal$. Special consideration is given to maps with self-dual delta-matroids in subsection \ref{subsection: self duality}.

In section \ref{section: partial duals} partial duals of maps are introduced, with remarks on the partial duals of hypermaps. We discuss both the combinatorial and geometric interpretation. In subsection \ref{subsection: partial duals Galois} we give a link from \cite{moffat14} between partial duals and delta-matroids and use it to show that a map always has a partial dual defined over its field of moduli.

In section \ref{section: tropical} we present a relationship between maps, their partial duals and tropical curves and note some similarities between the tropical curves associated to dessins that are in the same orbit of $\absgal$.

\section{Dessins and bipartite maps}
\label{section: dessins}

Throughout this paper $X$ shall denote a compact Riemann surface or its underlying connected and closed orientable topological surface. Furthermore, since compact Riemann surfaces are algebraic, $X$ shall denote an algebraic curve as well. We consider $X$ to be oriented, with positive orientation. Permutations shall be multiplied from left to right.

\begin{dfn}\normalfont A \emph{dessin d'enfant}, or just \emph{dessin} for short, is a pair $(X,f)$ where $X$ is a compact Riemann surface (or, equivalently, an algebraic curve) defined over $\algbr$ and $f\colon X\to\rsphere$ is a holomorphic ramified covering of the Riemann sphere, ramified at most over a subset of $\{0,1,\infty\}$.\end{dfn}

The pair $(X,f)$ is called a \emph{\belyi pair} as well, whilst the map $f$ is called a \emph{\belyi map} or a \emph{\belyi function}. Sometimes we will denote a dessin by $D=(X,f)$ to emphasise both the curve and the \belyi map. A dessin is of \emph{genus} $g$ if $X$ is of genus~$g$.

Two dessins $(X_1,f_1)$ and $(X_2,f_2)$ are isomorphic if they are isomorphic as coverings, that is if there is an orientation preserving homeomorphism $h\colon X_1 \to X_2$ such that $f_2\circ h=f_1$.

Under the terminology of Grothendieck and Schneps \cite{schneps94, schneps_lochak97vol1}, a dessin is called \emph{pre-clean} if the ramification orders above $1$ are at most 2, and \emph{clean} if they all are precisely equal to 2. The associated \belyi maps are called \emph{pre-clean} and \emph{clean \belyi maps}, respectively.

\begin{dfn}\normalfont A bipartite map on $X$ is a map on a topological surface $X$ with bipartite structure, that is the set of vertices can be decomposed into a disjoint union $B\cup W$ such that every edge is incident with precisely one vertex from $B$ and one vertex from $W$. Vertices from $B$ and $W$ are called black and white, respectively.\end{dfn}

Two bipartite maps $\map_1$ on $X_1$ and $\map_2$ on $X_2$ are isomorphic if there is an orientation preserving homeomorphism $X_1\to X_2$ which restricts to a bipartite graph isomorphism. When working with bipartite maps we shall adopt the following.

\begin{conv}\normalfont  The segments incident with precisely one black and one white vertex in a bipartite map shall be called \emph{darts}. Since every map can be thought of as a bipartite map by considering the edge midpoints as white vertices (see figure \ref{figure3}), we shall reserve the term \emph{edge} for maps only. To summarise, a bipartite map has darts, not edges, whilst an edge of a map has precisely two darts.\end{conv}

To every bipartite map on a topological surface $X$ a dessin corresponds, and vice-versa. This correspondence is realised in the following way: given a dessin $(X,f)$, the preimage $f^{-1}([0,1])$ of the closed unit interval will produce a bipartite map on the underlying surface of the curve $X$ such that the vertices of the map correspond to the points in the preimages of $0$ and $1$, and the darts correspond to the preimages of the open unit interval. The bipartite structure is obtained by colouring the preimages of $0$ in black and the preimages of $1$ in white.

On the other hand, given a bipartite map on a topological surface $X$, colour the vertices in black and white so that the bipartite structure is respected. To the interior of each face add a single new vertex and represent it with a diamond $\diamond$, so that it is distinguished from the black and white vertices. Now triangulate $X$ by connecting the diamonds with the black and white vertices that are on the boundaries of the corresponding faces. Following the orientation of $X$, call the triangles with vertices oriented as $\bullet$-$\circ$-$\diamond$-$\bullet$ positive, and call other triangles negative (see figure \ref{figure2}).
\begin{figure}[ht]
  \centering
  \includegraphics[scale=.7, trim={5cm 17cm 3.5cm 5.3cm}]{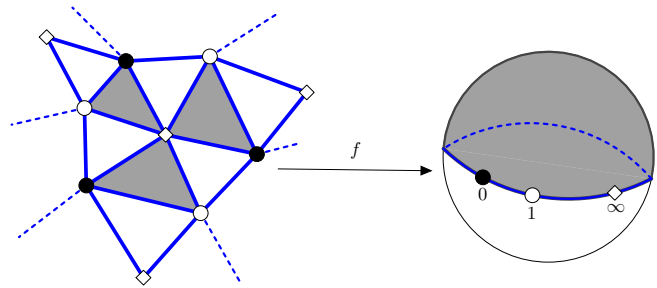}
  \caption{The positive (shaded) and negative triangles are mapped to the upper and lower-half plane, respectively. The sides of the triangles are mapped to $\mathbb R\cup\{\infty\}$ so that the black and white vertices map to 0 and 1, respectively, and the face centres map to $\infty$.}\label{figure2}
\end{figure}
Now map the positive and negative triangles to the upper and lower half-plane of $\mathbb C$, respectively, and map the sides of the triangles to the real line so that the black, white and diamond vertices are mapped to $0$, $1$ and $\infty$, respectively. As a result, a ramified cover $f\colon X\to\rsphere$, ramified only over a subset of $\{0,1,\infty\}$ will be produced. We now impose on $X$ the unique Riemann surface structure which makes $f$ holomorphic. For a detailed description of this correspondence see \cite[sections 4.2 and 4.3]{girondo_gonzalez-diez}.

\begin{rmrk}\normalfont  In the introduction we stated that maps correspond to clean dessins. Here we explain why this is the case: a given map with $n$ edges can be refined into a bipartite map $2n$ darts by adding the edge midpoints of the map as white vertices. The corresponding \belyi function will obviously have ramification orders at the white vertices equal to 2. In the other way, given a clean dessin, we first obtain a bipartite map with $2n$ darts in which every white vertex is incident to precisely two darts, since all the ramification orders above 1 are equal to 2. By ignoring the white vertices we obtain a map with $n$ edges. See figure \ref{figure3} for an example.
\begin{figure}[ht]
  \centering

  \includegraphics[scale=.85,trim={6.5cm 18cm 6.25cm 5cm}]{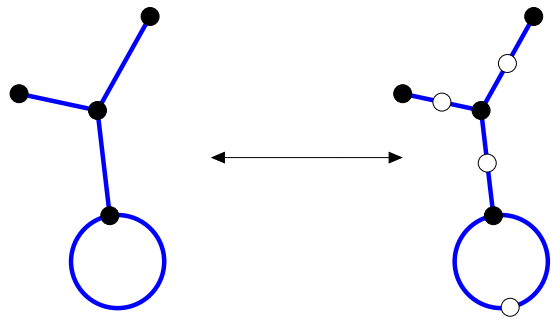}
  \caption{A map (left) is transformed into a clean dessin (right) by adding edge midpoints as white vertices. In the other way, from a clean dessin we obtain a map by ignoring the white vertices.}\label{figure3}
\end{figure}
\end{rmrk}

From now on we shall think of dessins both as bipartite maps, and as \belyi pairs. Consequently, clean dessins are synonymous with maps.

\subsection{A permutation representation of dessins}\label{subsection:perm_rep}

Throughout this section let $(X,f)$ be a dessin with $n$ darts (or, equivalently, such that $f$ is a degree $n$ ramified covering). The goal of this section is to describe how each such dessin can be represented by a triple $\per$ of permutations in $S_{n}$. However, let us first introduce the following labelling convention to which we will conform throughout the rest of this paper.

\begin{conv}\normalfont We label the darts of a dessin with the elements of the set $\{1,\dots,n\}$ so that, when standing at a black vertex, and looking towards an adjacent white vertex, the label is placed on the `left side' of the dart. See figure \ref{figure4} for an example.
\end{conv}
\begin{figure}[ht]
  \sidecaption[t]
  \centering
  \includegraphics[scale=.7, trim={5.5cm 20.25cm 5.5cm 4.75cm}]{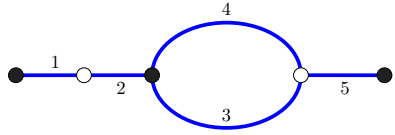}
  \caption{Labelling of darts. The labels are always on the left when looking from a black vertex to its adjacent white vertices.}\label{figure4}
\end{figure}

Following the previous convention, label the darts of a dessin arbitrarily. Now let $\sigma$ and $\alpha$ denote the permutations which record the cyclic orderings of the labels around the black and white vertices, respectively, and let $\varphi$ denote the permutation which records the counter-clockwise ordering of the labels within each face.
\begin{exmp}\normalfont For the dessin in figure \ref{figure4} we have $\sigma=(1)(2\,3\,4)(5)$, $\alpha=(1\,2)(3\,5\,4)$ and $\varphi=(1\,4\,5\,2)(3)$. The cycles of length 1 are usually dropped from the notation. Note that the cycle corresponding to the `outer face' is, from the reader's perspective, recorder clockwise. This does not violate our convention since that face should be viewed from the opposite side of the sphere \cite[remark 1.3.18(3)]{lando_zvonkin}.
\end{exmp}

Since the labelling was arbitrary, a change of labels corresponds to simultaneous conjugation of $\sigma$, $\alpha$ and $\varphi$ by some element in $S_n$. Therefore, any dessin can be represented, up to conjugation, as a triple of permutations.

\begin{dfn}\normalfont The length of a cycle in $\sigma$ or $\alpha$ corresponding to a black or a white vertex, respectively, is called the \emph{degree} of the vertex. The length of a cycle in $\varphi$ corresponding to a face is called the \emph{degree} of the face. Thus, the degree of a vertex is the number of darts incident to it, while the degree of a face is half the number of darts on its boundary.
\end{dfn}

A triple $(\sigma,\alpha,\varphi)$ representing a dessin $D=(X,f)$ satisfies the following properties:
\begin{itemize}
  \item the group $\left\langle\sigma,\alpha,\varphi\right\rangle$ acts transitively on the set $\{1,\dots,n\}$ and
  \item $\sigma\alpha\varphi=1$.
\end{itemize}
The first property above is due to the fact that dessins are connected while the second is due to the following: consider three non-trivial simple loops $\gamma_0$, $\gamma_1$ and $\gamma_\infty$ on $\rsphere\setminus\{0, 1, \infty\}$ based at $1/2$ and going around 0, 1 and $\infty$ once, respectively. The lifts of these loops under $f$ correspond to paths on $X$ that start at some and end at another (possibly the same) point in $\inv f(1/2)$ . We observe the following.
\begin{itemize}
  \item Every dart of $D$ contains precisely one element of $\inv f(1/2)$ since $f$ is unramified at $1/2$.
  \item The cardinality of $\inv f(1/2)$ is precisely $n$. Hence there is a bijection between $\inv f(1/2)$ and $\{1,\dots,n\}$.
  \item With respect to this bijection, $\sigma$, $\alpha$ and $\varphi$  can be thought of as permutations of the set $\inv f(1/2)$.
\end{itemize}
Therefore the loops $\gamma_0$, $\gamma_1$ and $\gamma_\infty$ induce $\sigma$, $\alpha$ and $\varphi$. Since the product $\gamma_0\gamma_1\gamma_\infty$ is trivial, the corresponding permutation $\sigma\alpha\varphi$ must be trivial as well.

We have now seen that to every dessin with $n$ darts we can assign a triple of permutations in $S_n$ such that their product is trivial and the group that they generate acts transitively on the set $\{1,\dots, n\}$. In a similar fashion we can show that this assignment works in the opposite direction: given three permutations $\sigma$, $\alpha$ and $\varphi$ in $S_n$ such that $\sigma\alpha\varphi=1$ and the group that they generate acts transitively on $\{1,\dots,n\}$, we can construct a dessin with $n$ darts so that the cyclic orderings of labels around vertices correspond to the cycles of $\sigma$, $\alpha$ and $\varphi$, up to simultaneous conjugation. Therefore, up to simultaneous conjugation, a dessin is uniquely represented by a transitive triple $(\sigma,\alpha,\varphi)$ with $\sigma\alpha\varphi=1$, and such a triple recovers a unique dessin (up to isomorphism).

\begin{rmrk}\normalfont Obviously, dessins correspond to $2$-generated transitive permutation groups since we can set $\varphi=\inv{(\sigma\alpha)}$. However, we prefer to emphasise all three permutations.
\end{rmrk}

We shall use the notation $D=\per$ to denote that a dessin $D$ is represented by the triple $\per$.

\begin{dfn}\normalfont The subgroup of $S_n$ generated by $\sigma$, $\alpha$ and $\varphi$ is called \emph{the monodromy group} of $D=\per$ and denoted by $\mon D$.\end{dfn}

The monodromy group is actually defined up to conjugation in order to account for all the possible ways in which a dessin can be labelled.

\begin{exmp}\normalfont The monodromy group of the dessin in figure \ref{figure3} is (isomorphic to) $\txn{PSL}_3(2)$. The monodromy group of the dessin in figure \ref{figure4} is $S_5$.
\end{exmp}

\section{\belyi's theorem and the Galois action on dessins}\label{section: Belyi}

One of the most mysterious objects in mathematics is the absolute Galois group $\absgal$, the group of automorphisms of $\algbr$ that fix $\mathbb Q$ point-wise, and the study of its structure is one of the goals of the Langlands program. Grothendieck, in his remarkable \emph{Esquisse d'un Programme} \cite{grothendieck97}, envisioned an approach towards understanding $\absgal$ as an automorphism group of a certain topological object;  the starting point of his approach is \belyi's theorem, which we restate here.

\begin{thm}[\belyi]\label{belyi}Let $X$ be an algebraic curve defined over $\mathbb C$. Then $X$ is defined over $\algbr$ if, and only if there is a holomorphic ramified covering $f\colon X\to\rsphere$, ramified at most over a subset of $\{0,1,\infty\}$.
\end{thm}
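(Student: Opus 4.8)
The plan is to prove the two implications separately. The implication that the existence of a \belyi map forces $X$ to be defined over $\algbr$ is the ``soft'' direction and rests on the rigidity of finite coverings of the thrice-punctured sphere; the converse, that every curve over $\algbr$ admits a \belyi map, is the elementary but ingenious direction originally due to \belyi, and I would establish it through two explicit reductions of the branch locus.

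For the forward direction, suppose $f\colon X\to\rsphere$ is ramified only over $\{0,1,\infty\}$. By the permutation representation of Section~\ref{subsection:perm_rep}, the isomorphism class of such a degree-$n$ \belyi pair is encoded by a transitive triple $\per$ in $S_n$ with $\sigma\alpha\varphi=1$, and there are only finitely many such triples up to simultaneous conjugation. Galois conjugation by any $\tau\in\txn{Aut}(\mathbb C\,|\,\mathbb Q)$ carries a degree-$n$ \belyi pair to another degree-$n$ \belyi pair, and hence permutes this finite set of isomorphism classes; consequently the orbit of $(X,f)$ is finite and its field of moduli is a finite extension of $\mathbb Q$, that is, a subfield of $\algbr$. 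In particular $(X,f)$ is fixed by $\txn{Aut}(\mathbb C\,|\,\algbr)$, and since $\algbr$ is algebraically closed a standard descent argument then shows that $X$ and $f$ are already defined over $\algbr$.

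For the converse, start from any non-constant morphism $g\colon X\to\rsphere$ defined over $\algbr$; its critical values form a finite set $B\subset\rsphere(\algbr)$. The first reduction makes all branch points rational: if $b\in B$ has maximal degree $d>1$ over $\mathbb Q$, compose $g$ with the minimal polynomial $m_b\in\mathbb Q[x]$ of $b$. This sends the entire Galois orbit of $b$ to $0$, while the new branch points it introduces are the critical values of $m_b$, whose critical points are roots of $m_b'$ of degree $d-1$, so these new values have degree at most $d-1<d$; hence a complexity measure (the maximal degree together with the number of top-degree Galois orbits) strictly decreases, and after finitely many steps every branch point lies in $\rsphere(\mathbb Q)$. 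The second reduction cuts the branch locus down to three points: after a Möbius transformation over $\mathbb Q$ placing three of the rational branch points at $0,1,\infty$, any remaining branch point $\lambda=m/(m+n)$ with $m,n$ positive integers is eliminated by composing with the \belyi polynomial $\beta_{m,n}(x)=\frac{(m+n)^{m+n}}{m^m n^n}\,x^m(1-x)^n$, which sends $\{0,1,\lambda,\infty\}$ into $\{0,1,\infty\}$ and has critical values only in $\{0,1,\infty\}$; thus it removes $\lambda$ without enlarging the branch locus, strictly lowering its cardinality. Iterating yields a composite $\beta\circ\cdots\circ g\colon X\to\rsphere$ ramified only over $\{0,1,\infty\}$, a \belyi map.

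The main obstacle is the converse direction, and within it the first reduction: one must verify that composing with a minimal polynomial genuinely decreases the arithmetic complexity of the branch locus rather than merely trading one algebraic point for another, which is precisely where the collapse of the Galois orbit of $b$ onto $0$ and the degree bound $\deg m_b'=d-1$ are used together. The second reduction is then a finite bookkeeping argument relying on the explicit critical-value computation for $\beta_{m,n}$, and the forward direction reduces to the finiteness of the combinatorial classification already developed in the excerpt.
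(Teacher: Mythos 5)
The paper never proves this theorem: it states it and points to \belyi's papers, Szamuely, Girondo--Gonz\'alez-Diez and Goldring for proofs, so there is no internal argument to compare against; what you have written is the classical proof found in those references. Your converse direction is correct and essentially complete. The lexicographic complexity measure (maximal degree $d$ of a branch point over $\mathbb Q$, together with the number of Galois orbits attaining it) is exactly what makes the minimal-polynomial reduction terminate: $m_b$ collapses the orbit of $b$ to $0$, sends every other orbit to an orbit of degree at most $d$ without increasing their number, and the new critical values, being images of roots of $m_b'$, have degree at most $d-1$. The second reduction is also right: $\beta_{m,n}$ maps $\{0,1,\lambda,\infty\}$ into $\{0,1,\infty\}$ and its own critical values lie in $\{0,1,\infty\}$, so the branch locus strictly shrinks. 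One small omission: a rational branch point $\lambda$ need not lie in $(0,1)$, so before writing $\lambda=m/(m+n)$ with $m,n$ positive you must first move it into $(0,1)$ by one of the six M\"obius transformations over $\mathbb Q$ permuting $\{0,1,\infty\}$ (such as $z\mapsto 1/z$ or $z\mapsto 1-z$); routine, but it should be said.

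The one place where your sketch papers over real content is the final step of the forward direction. The finiteness of the $\txn{Aut}(\mathbb C\,|\,\mathbb Q)$-orbit of $(X,f)$ among degree-$n$ \belyi pairs is fine, and so is the conclusion that the field of moduli is a number field; but ``fixed by $\txn{Aut}(\mathbb C\,|\,\algbr)$ plus a standard descent argument'' is not routine Weil descent --- the cocycle condition is precisely what can fail, and this very paper emphasises (in the field-of-moduli invariant, with the examples from Lando--Zvonkin) that the field of moduli need not be a field of definition. What you actually need is the definability criterion: a curve, or a \belyi pair, is defined over $\algbr$ if and only if its $\txn{Aut}(\mathbb C\,|\,\mathbb Q)$-orbit contains only finitely many isomorphism classes. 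This is a genuine theorem, proved for instance in Girondo--Gonz\'alez-Diez (the route the paper itself cites) via algebraic families and a Noetherianity argument, and over the algebraically closed field $\algbr$ it does produce a model of both $X$ and $f$; the subtlety of descending to the field of moduli itself never arises. Replace the appeal to ``standard descent'' by an appeal to that criterion and your argument is the standard proof, in agreement with the sources the paper relies on.
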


Aside from \belyi's own papers \cite{belyi80, belyi02}, various other proofs can be found in, for example, \cite[theorem 4.7.6]{szamuely09} or \cite[chapter 3]{girondo_gonzalez-diez} or the recent new proof in \cite{goldring14}. \belyi himself concluded that the above theorem implies that $\absgal$ embeds into the outer automorphism group of the profinite completion of the fundamental group of $\rsphere\setminus\{0,1,\infty\}$, however it was Grothendieck who observed that $\absgal$ must therefore act faithfully on the set of dessins as well. This interplay between algebraic, combinatorial and topological objects is what prompted Grothendieck to develop his \emph{Esquisse}. For more detail, see \cite{schneps_lochak97vol1} or \cite{szamuely09}.

\subsection{Galois action on dessins}\label{subsection: Galois action}

Let $D=(X,f)$ be a dessin. If $X$ is of genus 0, then necessarily $X=\rsphere$ and $f\colon\rsphere\to\rsphere$ is a rational map with critical values in the set $\{0,1,\infty\}$. If $f=p/q$, where $p,q \in\mathbb C[z]$, then \belyi's theorem implies that $p,q\in\algbr[z]$. Moreover, the coefficients of both $p$ and $q$ generate a finite Galois extension $K$ of $\mathbb Q$. Therefore $p,q\in K[z]$, and $\gal K$ acts on $f$ by acting on the coefficients of $p$ and $q$, that is if $\theta\in\gal K$ and
\begin{eqnarray*}
  f(z) &=& \frac{a_0+a_1z+\cdots+a_mz^m}{b_0+b_1z+\cdots b_nz^n},\\[.3cm]
 \txn{then }f^\theta(z) &=& \frac{\theta(a_0)+\theta(a_1)z+\cdots+\theta(a_m)z^m}{\theta(b_0)+\theta(b_1)z+\cdots \theta(b_n)z^n}.
\end{eqnarray*}

If $X$ is of genus 1 or 2, then as an hyperelliptic algebraic curve it is defined by the zero-set of an irreducible polynomial $F$ in $\mathbb C[x,y]$. This time we must take into consideration the coefficients of both $F$ and $f$ which, due to \belyi's theorem again, generate a finite Galois extension $K$ of $\mathbb Q$. Similarly as in the genus 0 case, $\gal K$ acts on $D$ by acting on the coefficients of both $F$ and $f$ simultaneously. When the genus of $X$ is at least 3, the action is exhibited similarly.

It is not immediately clear that the action of some automorphism in $\gal K$ on a \belyi map $f$ will produce a \belyi map. This indeed is the case and we refer the reader to the discussion in \cite[ch. 2.4.2]{lando_zvonkin}.

Since any $\mathbb Q$-automorphism of $K$ extends to an $\mathbb Q$-automorphism of $\algbr$ \cite[ch. 3]{bor_jan}, we truly have an action of $\absgal$ on the set of dessins.

We shall denote by $D^\theta=(X^\theta,f^\theta)$ the dessin that is the result of the action of $\theta\in\absgal$ on $D=(X,f)$. We shall also say that $D^\theta$ is \emph{conjugate} to $D$.

The following example is borrowed from \cite[ex. 2.3.3]{lando_zvonkin}.

\begin{exmp}\normalfont \label{exmp1}Let $D=(X,f)$ be a dessin where $X$ is the elliptic curve
\[y^2=x(x-1)(x-(3+2\sqrt 3)),\]
and $f\colon X\to \rsphere$ is the composition $g\circ \pi_x$, where $\pi_x \colon X\to\rsphere$ is the projection to the first coordinate and $g\colon\rsphere\to\rsphere$ is given by
\[g(z)=-\frac{(z-1)^3(z-9)}{64z}.\]
The corresponding bipartite map is depicted on the left in figure \ref{figure5}.

\begin{figure}[ht]
  \centering
    \includegraphics[trim=7cm 14cm 7cm 5cm, width=5cm]{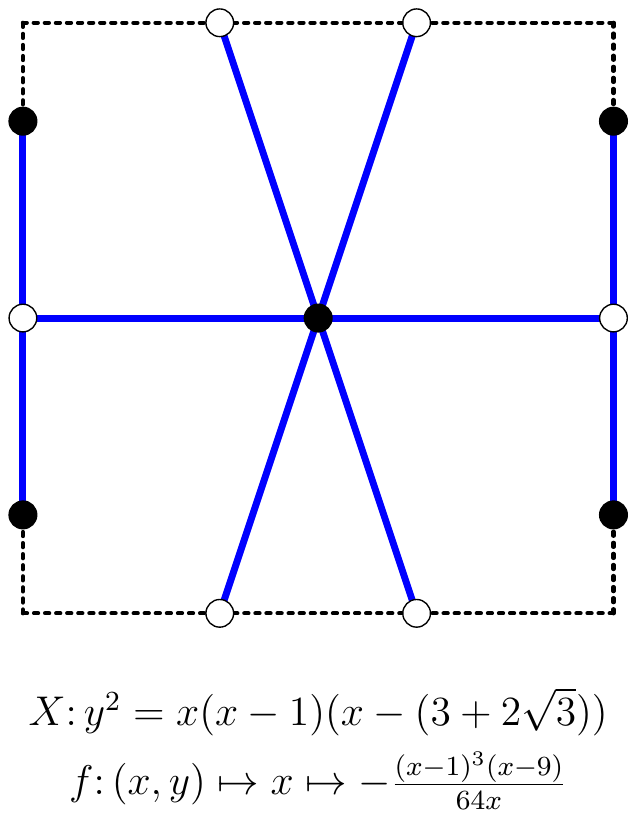}
    \includegraphics[trim=7cm 14cm 7cm 5cm, width=5cm]{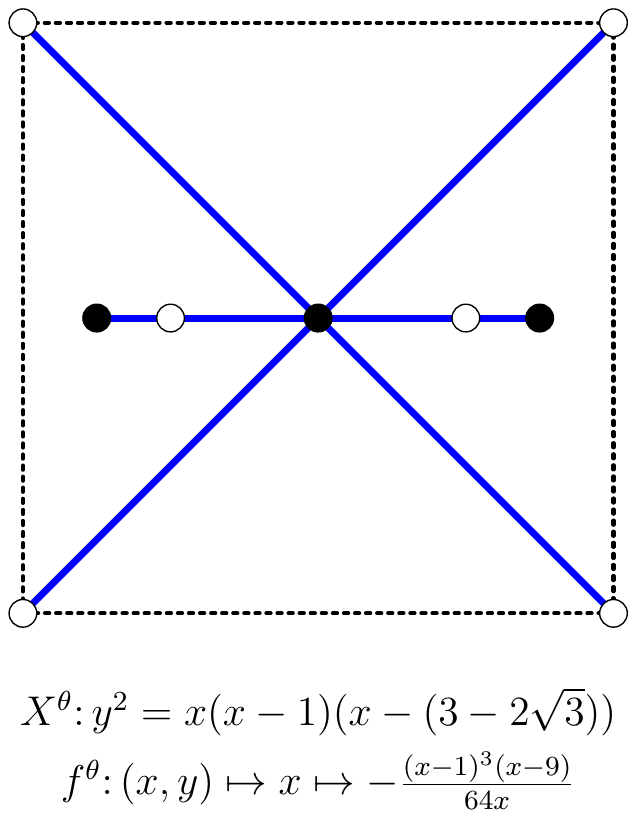}
  \caption{The two dessins $(X,f)$ and $(X^\theta,f^\theta)$ from example \ref{exmp1}. The dotted lines indicate the boundary of the polygon representation of an orientable genus 1 surface with the usual identification of the left-and-right and top-and-bottom sides.}\label{figure5}
\end{figure}

Note that we must consider $g\circ \pi_x$ and not just $\pi_x$ since $\pi_x$ is not a \belyi map; it is ramified over four points, namely $0$, $1$, $3+2\sqrt 3$ and $\infty$. However, $g$ maps these four points onto the set $\{0,1,\infty\}$ and therefore $g\circ\pi_x$ is a true \belyi map.

The Galois extension that the coefficients of $X$ and $f$ generate is $K=\mathbb Q(\sqrt 3)$ and the corresponding Galois group has only one non-trivial automorphism $\theta$ given by $\theta\colon\sqrt3\mapsto-\sqrt3$. Therefore $X^\theta$ is the elliptic curve $y^2=x(x-1)(x-(3-2\sqrt 3))$. The curve $X^\theta$ is non-isomorphic to $X$, which can easily be seen by computing the $j$-invariants of both.

What about $f^\theta$? In this case, $\pi_x\colon X^\theta\to\rsphere$ is unramified over $3+2\sqrt 3$ and ramified over $3-2\sqrt 3$. However, $g$ maps $3-2\sqrt 3$ to 0 as well, and since $g$ is defined over $\mathbb Q$, the \belyi functions $f$ and $f^\theta$ coincide. The bipartite map corresponding to $(X^\theta,f^\theta)$ is depicted on the right in figure \ref{figure5}.
\end{exmp}

This action of $\absgal$ on dessins is faithful already on the set of \emph{trees}, i.e.\ the genus 0 dessins with precisely one face and with polynomials as \belyi functions. However, this is not straight-forward (proofs can be found in \cite{schneps94, girondo_gonzalez-diez}) and, surprisingly, it is much easier to show faithfulness in genus 1 \cite[ch. 4.5.2]{girondo_gonzalez-diez}. Moreover, the action is faithful in every genus \cite[ch. 4.5.2]{girondo_gonzalez-diez}.

\subsection{Galois invariants}\label{subsection: invariants}

Here we shall list a number of properties of dessins which, up to various notions of equivalence, remain invariant under the action of $\absgal$. Such properties are called \emph{Galois invariants} of dessins. We shall use the notation $D\simeq D'$ to indicate that two dessins $D$ and $D'$ are conjugate.

\begin{invar}[Passport]\normalfont\label{passport}Let $D=(\sigma,\alpha,\varphi)$ be a dessin with $n$ darts. The cycle types of $\sigma$, $\alpha$ and $\varphi$ define three partitions $\lambda_\sigma$, $\lambda_\alpha$ and $\lambda_\varphi$ of $n$. The \emph{passport} of $D$ is the sequence $[\lambda_\sigma,\lambda_\alpha,\lambda_\varphi]$.
If $D'=(\sigma',\alpha',\varphi')$ and $D\simeq D'$, then $[\lambda_\sigma,\lambda_\alpha,\lambda_\varphi]=[\lambda_{\sigma'},\lambda_{\alpha'},\lambda_{\varphi'}]$. In other words, conjugate dessins have the same passport.
\end{invar}

We compactly record a partition of, for example, $n=17=3+3+3+3+2+1+1+1$ as $3^421^3$. If a double-digit number appears in the partition, for example $23=11+11+1$, then we record it as $(11)^21$.

\begin{exmp}\normalfont The dessin in figure \ref{figure3} has the sequence $[3^21^2,2^4,71]$ as its passport. The dessin in figure \ref{figure4} has $[31^2,32,41]$ as its passport. The two dessins in figure \ref{figure5} both have $[61^2,42^2,62]$ as their passport.
\end{exmp}

The passport is a very crude invariant, however much useful information can be extracted from it. For example, the number of black vertices, white vertices, darts and faces is invariant and hence the genus of the surface must also be invariant. Moreover, we can conclude that every orbit of the action is finite since there are only finitely many dessins with a given passport.

\begin{invar}[Monodromy group]\normalfont\label{monodromy}If $D\simeq D'$, then $\mon D\cong\mon{D'}$. In other words, conjugate dessins have isomorphic monodromy groups.
\end{invar}

\begin{exmp}\normalfont The monodromy group of the dessin $D$ on the left side in figure \ref{figure5} is the nilpotent group given by the external wreath product of $\mathbb Z_2$ by the alternating group $A_4$. Since the dessin on the right side of the same figure is conjugate to $D$, its monodromy group is isomorphic to $\mon D$.
\end{exmp}

The monodromy group is a much finer invariant than the passport since dessins with the same passport may have non-isomorphic monodromy groups.

\begin{invar}[Automorphism group]\normalfont Let $D=\per$. The centre of $\mon D$ in $S_n$ is the \emph{automorphism group} of $D$, denoted by $\aut D$. If $D\simeq D'$, then \mbox{$\aut D\cong\aut{D'}$}.
\end{invar}

If the automorphism group of a dessin $D$ acts transitively on the set $\{1,\dots,n\}$ or, equivalently, if $|\aut D|=n$,  then we say that the dessin is \emph{regular}. It has been shown in \cite{gonzalez-diez_jaik13, guillot14} that $\absgal$ acts faithfully on the set of regular dessins as well.

\begin{invar}[Cartography group]\normalfont The \emph{cartography group} $\txn{Cart}(D)$ of a dessin $D$ is the monodromy group of the map obtained from $D$ by colouring all the white vertices black and adding new white vertices to the midpoints of edges. Therefore, for maps or clean dessins we have $\txn{Cart}(D)=\mon D$. As it was the case with the monodromy group, conjugate dessins have isomorphic cartography groups.
\end{invar}

Since the cartography groups are subgroups of $S_{2n}$, when $n$ is large they are in general more difficult to compute than the monodromy groups. However, G. Jones and M. Streit have shown in \cite{jones_streit97} that the cartography group can be used to distinguish between the orbits of $\absgal$ when the monodromy group does not suffice. That is, there are non-conjugate dessins with isomorphic monodromy groups but non-isomorphic cartography groups.

More Galois invariant groups that arise from the monodromy group in a similar fashion can be found in \cite{matchett_wood06}.

\begin{invar}[Duality]\normalfont\label{duality}Given a dessin $D=(X,f)$ we define its \emph{dual} dessin $D^*$ to be the dessin corresponding to the \belyi pair $(X,1/f)$. Clearly, if $D_1\simeq D_2$, then $D_1^*\simeq D_2^*$.\end{invar}

In terms of permutation representations, if $D=\per$, then $D^*$ will have the triple $(\inv\varphi,\inv\alpha,\inv\sigma)$ as its permutation representation. Geometrically this means that the black vertices and the face centres of the dual are the face centres and the black vertices of $D$, respectively, while the white vertices remain unchanged, except for the orientation of the labels. The darts of $D^*$ are the curved segments that connect the face centres and the white vertices of $D$. See figure \ref{figure6} for an example.

\begin{figure}[ht]
  \sidecaption
  \centering
  \includegraphics[scale=.7,trim={5.25cm 17.15cm 5.75cm 5.5cm}]{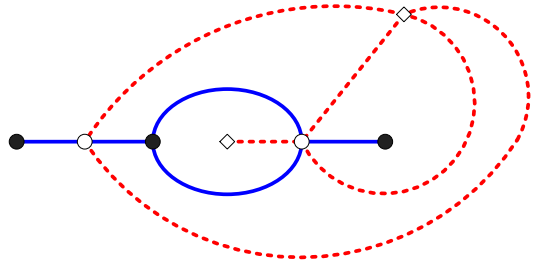}
  \caption{The dessin (full) from figure \ref{figure4} and its dual (dashed).}\label{figure6}
\end{figure}

\begin{rmrk}\normalfont If $D$ is a map then $D^*$ corresponds to the geometric dual of a map. If $e$ is an edge of $D$, then the unique edge $e^*$ in $D^*$ which intersects $e$ at the appropriate white vertex is called the \emph{coedge} of $e$.\end{rmrk}

\begin{invar}[Self-duality]\normalfont We say that a dessin is self-dual if it is isomorphic to its dual. If $D$ is self dual and $D\simeq D'$, then $D'$ is self-dual as well. We shall considered self-duality again in section \ref{subsection: self duality}.
\end{invar}

\begin{invar}[Field of moduli]\normalfont Let $D$ be a dessin and
\[\txn{Stab}(D)=\{\theta\in\absgal\mid D^\theta= D \}\]
the stabiliser of $D$ in $\absgal$. The \emph{field of moduli} of $D$ is the fixed field corresponding to $\txn{Stab}(D)$, that is the field \[\{q\in\algbr\mid\theta(q)=q, \txn{ for all }\theta\in\txn{Stab}(D)\}.\]

Alternatively, the field of moduli of $D$ is the intersection of all \emph{fields of definition} of $D$, i.e. all the fields in which we can write down a \belyi pair for $D$.
\end{invar}

Fields of moduli are notoriously difficult to compute, and moreover, there are dessins whose \belyi pairs cannot be realised over their own fields of moduli! \cite[e.g. 2.4.8 and 2.4.9]{lando_zvonkin} Therefore, a natural question to ask is when can a dessin be defined over its field of moduli. Based on the work of Birch in \cite{birch1994noncongruence} (see also \cite{SijslingVoight}), a necessary, but not sufficient condition was given in \cite{wolfart97}\footnote{See also theorem 2.4.14 in \cite{lando_zvonkin}.}.

\begin{thm}\label{theorem: bachelor}A dessin can be defined over its field of moduli if there exists a black vertex, or a white vertex, or a face center which is unique for its type and degree.
\end{thm}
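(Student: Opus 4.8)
The plan is to combine Weil's Galois descent criterion with Hilbert's Theorem 90, using the distinguished vertex to rigidify the descent data. Write $M$ for the field of moduli and $G=\txn{Gal}(\algbr\mid M)=\txn{Stab}(D)$, so that $D^\theta\simeq D$ for every $\theta\in G$. First I would choose, for each $\theta$, an isomorphism of dessins $\phi_\theta\colon X^\theta\to X$ (a biholomorphism with $f\circ\phi_\theta=f^\theta$). By Weil's theorem $D$ is definable over $M$ as soon as the $\phi_\theta$ can be chosen to satisfy the cocycle condition $\phi_\theta\circ\phi_\tau^\theta=\phi_{\theta\tau}$. Since each $\phi_\theta$ is pinned down only up to post-composition with an element of $A:=\aut{D}$, the defect $\phi_\theta\circ\phi_\tau^\theta=a_{\theta,\tau}\circ\phi_{\theta\tau}$ is a $2$-cocycle with values in $A$, and its class $\omega\in H^2(G,A)$ is the sole obstruction; the entire task is to prove $\omega=0$.

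Here the hypothesis enters. Let $P\in X$ be the preimage of $0$, $1$ or $\infty$ that is the unique point of its fibre with ramification degree $d$ (this single statement covers the black-vertex, white-vertex and face-centre cases). Every element of $A$ permutes each fibre preserving ramification orders, so it must fix $P$; and since $D^\theta$ has the same passport, each $\phi_\theta$ carries the corresponding unique point $P^\theta\in X^\theta$ to $P$. Consequently $A$ acts on the one-dimensional cotangent space $T_P^{\ast}X$, and I would invoke the standard fact that a nontrivial finite-order automorphism of a curve fixing $P$ cannot be tangent to the identity there (in characteristic $0$ it linearises, so a derivative of $1$ forces it to be the identity). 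Thus the derivative map $\chi\colon A\to\algbr^{\ast}$, sending $h$ to the scalar by which it acts on $T_P^{\ast}X$, is injective with image in the roots of unity; in particular $A\cong\mu_m$ and, crucially, $\chi$ is nothing but the standard inclusion $\mu_m\hookrightarrow\algbr^{\ast}$, equivariantly for the natural $G$-actions.

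The final step compares derivatives at $P$. Fixing a nonzero $\omega_0\in T_P^{\ast}X$, write $\phi_\theta^{\ast}\omega_0=c_\theta\,\omega_0^\theta$ with $c_\theta\in\algbr^{\ast}$. Pulling the relation $\phi_\theta\circ\phi_\tau^\theta=a_{\theta,\tau}\circ\phi_{\theta\tau}$ back to $T_P^{\ast}X$ and using $a_{\theta,\tau}^{\ast}\omega_0=\chi(a_{\theta,\tau})\,\omega_0$ gives
\[
\chi(a_{\theta,\tau})=\frac{c_\theta\,\theta(c_\tau)}{c_{\theta\tau}},
\]
which says precisely that $\chi_{\ast}\omega$ is the coboundary of the $1$-cochain $\{c_\theta\}$, so $\chi_{\ast}\omega=0$ in $H^2(G,\algbr^{\ast})$. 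Because $\chi$ is the standard inclusion, the Kummer sequence and Hilbert's Theorem 90 ($H^1(G,\algbr^{\ast})=0$) make $\chi_{\ast}\colon H^2(G,\mu_m)\to H^2(G,\algbr^{\ast})$ injective; hence $\omega=0$. A cocyclic system $\{\phi_\theta\}$ therefore exists, and Weil descent yields a model of $(X,f)$ over $M$.

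The heart of the argument, and the step I expect to require the most care, is the second paragraph: one must show that $A$ not merely fixes $P$ but acts faithfully on the tangent line there, identifying it $G$-equivariantly with genuine $m$-th roots of unity in $\algbr^{\ast}$. This faithful linearisation is exactly what turns the a priori inaccessible obstruction in $H^2(G,A)$ into a class annihilated by Hilbert 90. Without a point fixed by all of $A$ — which is precisely what ``unique for its type and degree'' guarantees — there is no reason for $\omega$ to vanish, matching the known existence of dessins that are not definable over their field of moduli.
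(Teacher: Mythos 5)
The paper offers no proof of this theorem to compare against: it is imported from Wolfart \cite{wolfart97} (building on Birch \cite{birch1994noncongruence}; see also \cite{SijslingVoight} and theorem 2.4.14 of \cite{lando_zvonkin}). Measured against those sources, your argument is correct and is essentially the standard one — it is in particular very close to the marked-descent argument of Sijsling--Voight and the D\`ebes--Emsalem technique: the distinguished point $P$ is fixed by all of $A=\aut{D}$ and is matched by every $\phi_\theta$, the faithful action on the cotangent line identifies $A$ with $\mu_m\subset\algbr^{*}$ equivariantly for the Galois action, and the Kummer sequence together with Hilbert 90 makes $H^2(G,\mu_m)\to H^2(G,\algbr^{*})$ injective, killing the Weil obstruction. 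All the key computations you indicate check out: the linearisation fact (a finite-order automorphism fixing $P$ with derivative $1$ is the identity) is valid in genus $0$ as well, since parabolic elements of $\txn{PGL}_2$ have infinite order; and the cochain identity $\chi(a_{\theta,\tau})=c_\theta\,\theta(c_\tau)\,c_{\theta\tau}^{-1}$ follows from pulling back $\omega_0$ exactly as you say. Three points deserve tightening in a final write-up, none of them fatal. First, you invoke the class $\omega\in H^2(G,A)$ before establishing that $A$ is abelian; for non-abelian $A$ there is no cohomology group and no single obstruction class, and moreover the well-definedness of the $G$-action $h\mapsto\phi_\theta\circ h^\theta\circ\phi_\theta^{-1}$ (independence of the choice of $\phi_\theta$) itself uses commutativity of $A$ — so the fixed-point and cotangent-line argument should come first, after which the obstruction formalism is legitimate. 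Second, $G=\txn{Gal}(\algbr\mid M)$ is profinite, so to apply Weil's criterion and Hilbert 90 verbatim you should reduce to a finite Galois extension $L\mid M$ over which $X$, $f$, the point $P$, all of $A$ and the finitely many isomorphisms $\phi_\theta$ are defined; this also guarantees the continuity of your cochains $c_\theta$ (they take finitely many values). Third, Weil descent a priori gives a model of the curve; one should note that since each $\phi_\theta$ commutes with the \belyi maps ($f\circ\phi_\theta=f^\theta$) the corrected cocycle is a descent datum for the pair, so $f$ descends to a morphism to $\rsphere$ over $M$ along with $X$. With these routine patches your proof is a faithful reconstruction of the argument behind the theorem as cited.
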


\section{Matroids and delta-matroids}\label{section: matroids}

It is often said that matroids are a combinatorial abstraction of linear independence. Formally we have
\begin{dfn}\normalfont Given a non-empty finite set $E$, a \emph{matroid on} $E$ is a non-empty family $M(E)$ of subsets of $E$ which is closed under taking subsets, i.e.
\begin{itemize}
  \item if $J\in M(E)$ and $I\subseteq J$, then $I\in M(E)$,
\end{itemize}
and satisfies the following \emph{augmentation axiom}:
\begin{itemize}
  \item if $I,J\in M(E)$ with $|I|<|J|$, then there exists $x \in J\setminus I$ such that $I\cup\{x\}\in M(E)$.
\end{itemize}
The elements of $M(E)$ obviously mimic the properties of linearly independent sets of vectors and are hence called \emph{independent sets}. Subsets of $E$ which are not independent are called \emph{dependent}. Maximal independent sets are called \emph{bases}, and, as the reader might suspect, any two bases of $M(E)$ are of the same size \cite[lemma 1.2.1]{oxley92}. Two matroids $M(E)$ and $M(E')$ are isomorphic if there is a bijection $\psi\colon E\to E'$ such that $\psi(I)$ is independent if, and only if $I$ is independent.
\end{dfn}

Matroids were introduced by Hassler Whitney \cite{whitney35} and, as the name suggests, arise naturally from matrices; the collection of linearly independent sets of columns in a matrix forms a matroid \cite[prop. 1.1.1]{oxley92}. Matroids which are isomorphic to matroids arising from matrices are called \emph{representable}.

A multitude of examples of matroids arise from graphs as well. Given an abstract undirected graph $G=(V,E)$, the collection of its acyclic sets of edges forms a matroid $M(G)$ \cite[theorem 4.1]{gordon_mcnulty}. The independent sets of this matroid are in fact subsets of $E$, however we denote it by $M(G)$ to emphasise that the matroid is arising from a graph. The spanning forests of $G$ correspond to the bases of $M(G)$. If $G$ is connected then the trees and the spanning trees correspond to the independent sets and the bases of $M(G)$. Matroids which are isomorphic to matroids arising from graphs are called \emph{graphic}. Moreover, every graphic matroid is isomorphic to the graphic matroid of some connected graph \cite[prop. 1.2.8]{oxley92}.

\begin{conv}\normalfont It is customary in matroid theory to drop the braces and commas when specifying sets. For example, $abc$ stands for the set $\{a,b,c\}$.\end{conv}

Given a matroid $M(E)$ we can completely recover the independent sets by describing only the collection $\mathcal B$ of its bases. On the other hand, if $\mathcal B$ is a non-empty collection of subsets of some non-empty set $E$, then $\mathcal B$ will be the collection of bases of a matroid if, and only if the following \emph{exchange axiom} is satisfied \cite[cor. 1.2.5]{oxley92}:
\begin{itemize}
  \item if $B_1$, $B_2\in\mathcal B$ and $x\in B_1\setminus B_2$, then there is $y\in B_2\setminus B_1$ such that $(B_1\setminus x)\cup y\in\mathcal B$.
\end{itemize}

Let us look at a simple example of a graphic matroid.

\begin{exmp}\normalfont \label{exmp1matroid}Let $G$ be the map obtained from the bipartite map in figure \ref{figure4} by colouring all the white vertices into black vertices (see figure \ref{figure7}). The bases of $M(G)$ are the sets $1235$ and $1245$ and they correspond precisely to the spanning trees of the map.
\end{exmp}


Let $\mathcal B$ be the collection of bases of some matroid $M(E)$ and let
\[\mathcal B^*=\{E\setminus B\mid B\in\mathcal B\}\]
be the collection of the complements of its bases. This collection is clearly non-empty and it can be shown that it satisfies the exchange axiom \cite[ch. 2]{oxley92}. The matroid with $\mathcal B^*$ as its collection of bases is called the \emph{dual matroid} of $M(E)$, and is denoted by $M^*(E)$.

\begin{exmp}\normalfont Let us go back to the map in figure \ref{figure7}. As we have seen in example \ref{exmp1matroid}, the bases of this map are $1235$ and $1245$. Recall that the unique edge of the dual map which intersects an edge $e$ of the map is labelled by $e^*$. Therefore the bases of the dual map should be the coedges $4^*$ and $3^*$. In figure \ref{figure7} we can see that this indeed is the case.
\begin{figure}[ht]
  \centering
  \includegraphics[scale=.7,trim={5.5cm 16cm 2.5cm 5.65cm}]{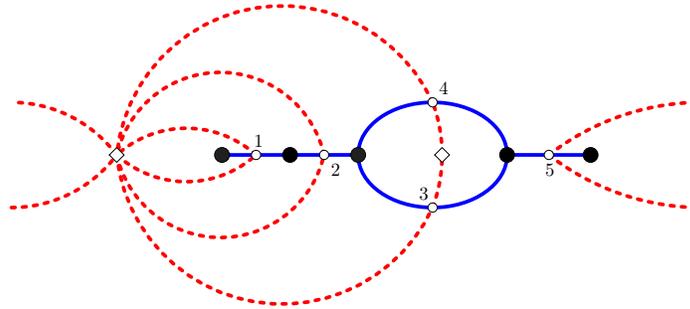}
  \caption{A map obtained from the dessin in figure \ref{figure4} by colouring the white vertices into black and adding new white vertices at the edge midpoints. The dual map (dashed) is formed by connecting the face centres to the (new) white vertices. The segments on the left and right go around the sphere and connect into a loop.}\label{figure7}
\end{figure}
\end{exmp}

We say that a matroid is \emph{cographic} if it is isomorphic to the dual of some graphic matroid. The following theorem of Whitney \cite{whitney33} establishes a matroidal characterisation of planarity.
\begin{thm}[Whitney's planarity criterion]\label{theorem: whitney}Let $G$ be a connected graph. Then $G$ is planar if, and only if $M(G)$ is cographic. Moreover, if $G$ is a plane map, then $M^*(G)=M(G^*)$, where $G^*$ is the geometric dual of $G$.\end{thm}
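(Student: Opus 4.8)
The plan is to establish the ``moreover'' statement first, since it is the geometric engine from which the forward implication of the equivalence falls out immediately. Recall that the bases of $M(G)$ are exactly the spanning trees of $G$, so the bases of the dual matroid $M^*(G)$ are the complements $E\setminus B$ of spanning trees $B$, while the bases of $M(G^*)$ are the spanning trees of $G^*$. Identifying the edge set of $G$ with that of $G^*$ through the coedge bijection $e\leftrightarrow e^*$, it therefore suffices to prove the following tree--cotree duality: a set $B\subseteq E$ is the edge set of a spanning tree of $G$ if and only if its complementary coedges $\{e^*\mid e\in E\setminus B\}$ form a spanning tree of $G^*$. This single combinatorial statement equates the bases of $M^*(G)$ with those of $M(G^*)$, and hence the two matroids.

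To prove the duality I would first pin down the cardinalities. If $G$ has $V$ vertices, $E$ edges and $F$ faces, then $G^*$ has $F$ vertices and $E$ edges, and Euler's formula $V-E+F=2$ gives $|E\setminus B|=E-(V-1)=F-1$, which is exactly the number of edges in a spanning tree of $G^*$. So it is enough to show that the complementary coedges are acyclic in $G^*$: a forest on the $F$ vertices of $G^*$ with $F-1$ edges is automatically connected, hence a spanning tree. For acyclicity I would argue topologically. Suppose the coedges of $E\setminus B$ contained a (simple) cycle $\gamma$ in $G^*$; as a closed curve on the sphere it separates the sphere into two discs, each containing at least one face of $G^*$, i.e.\ at least one vertex of $G$. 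Since in the standard drawing each primal edge $e$ crosses only its own coedge $e^*$, and no coedge of $\gamma$ is the dual of a tree edge, no edge of $B$ meets $\gamma$; thus $B$ cannot join a vertex of $G$ inside $\gamma$ to one outside, contradicting that $B$ is a connected spanning subgraph. Hence no such $\gamma$ exists.

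With the ``moreover'' part in hand the forward implication is immediate: if $G$ is planar, embed it as a plane map so that $M^*(G)=M(G^*)$ is a graphic matroid; then $M(G)=\bigl(M^*(G)\bigr)^*=\bigl(M(G^*)\bigr)^*$ exhibits $M(G)$ as the dual of a graphic matroid, i.e.\ as cographic.

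The converse, that $M(G)$ cographic forces $G$ to be planar, is where the real difficulty lies. The natural reformulation is that $M(G)$ is cographic precisely when $G$ admits an \emph{abstract dual}, a graph $H$ together with an edge bijection carrying the cycles of $G$ to the bonds (minimal edge cuts) of $H$; indeed $M(G)=M^*(H)$ says exactly that the circuits of $M(G)$ are the cocircuits of $M(H)$. The forward direction just proved shows that the geometric dual is such an abstract dual, so the content of the converse is to recover a planar embedding from a purely combinatorial dual. I would obtain this through Kuratowski's theorem: if $G$ were non-planar it would contain a subdivision of $K_5$ or $K_{3,3}$, and one checks that the graphic matroids $M(K_5)$ and $M(K_{3,3})$ are not cographic, while being cographic is preserved under the matroid minors (deletion and contraction) that realise subgraphs and subdivisions. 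Verifying that cographicness is genuinely minor-closed and that these two small matroids obstruct it is the main obstacle, and it is exactly the point at which the combinatorics of matroids must be married to the topology of surface embeddings.
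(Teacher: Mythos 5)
A preliminary remark: the paper does not prove this theorem at all --- it is imported from Whitney \cite{whitney33} (with a footnote pointing to Oxley) and used as a black box --- so your attempt can only be measured against the standard literature proofs, which it in fact follows. Your treatment of the ``moreover'' clause and of the forward implication is complete and correct: the tree--cotree duality argument (the Euler count $|E\setminus B|=E-(V-1)=F-1$ reducing the problem to acyclicity of the complementary coedges, plus the Jordan-curve argument that a coedge cycle $\gamma$ would separate vertices of $G$ that $B$ cannot then join, since a primal edge meets the dual graph only at the crossing with its own coedge) is exactly the classical proof of $M^*(G)=M(G^*)$, and $M(G)=\bigl(M(G^*)\bigr)^*$ then exhibits $M(G)$ as cographic.

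The converse, however, is left genuinely open: you reduce it to three assertions --- that a $K_5$- or $K_{3,3}$-subdivision in $G$ yields $M(K_5)$ or $M(K_{3,3})$ as a matroid minor of $M(G)$; that the class of cographic matroids is minor-closed; and that $M(K_5)$ and $M(K_{3,3})$ are not cographic --- and you prove none of them, saying only that ``one checks.'' These three facts are the entire content of the hard direction. The first two are routine but must be stated: $M(G\setminus e)=M(G)\setminus e$ and $M(G/e)=M(G)/e$ make graph minors into matroid minors, and the duality relations $(M\setminus e)^*=M^*/e$, $(M/e)^*=M^*\setminus e$ transfer minor-closedness from graphic to cographic matroids. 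The third admits a short counting proof you should supply. Suppose $M^*(K_5)=M(H)$ with $H$ connected (as the paper notes, every graphic matroid is the matroid of a connected graph); then $H$ has $10$ edges and rank $10-4=6$, hence $7$ vertices. The bonds of $H$ are the circuits of $M(H)^*=M(K_5)$, i.e.\ the cycles of $K_5$, all of size at least $3$; since each vertex star of $H$ is an edge cut and every edge cut is a disjoint union of bonds, $H$ has minimum degree at least $3$, forcing at least $\lceil 21/2\rceil=11>10$ edges, a contradiction. The same argument kills $K_{3,3}$: there $H$ would have $9$ edges and rank $9-5=4$, hence $5$ vertices, while the bonds of $H$ are the cycles of $K_{3,3}$, of length at least $4$, forcing minimum degree $4$ and hence at least $10>9$ edges. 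With these pieces written out your Kuratowski route closes; as it stands, the proposal proves only half the theorem.
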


\subsection{Delta-matroids}\label{subsection: delta-matroids}

As we have seen in theorem \ref{theorem: whitney}, the dual matroid of a plane map is the matroid of the dual map. This correspondence does not hold for graphs that are not planar. However, we would like to extend this property to non-planar graphs and their cellular embeddings, that is to maps on surfaces of any genus. To that effect, we introduce the following.

\begin{dfn}\normalfont \label{definition: delta-matroid}A \emph{delta-matroid} $\Delta(E)$ \emph{on} $E=\{1,\dots,n\}$ is a non-empty collection $\mathcal F$ of subsets of $E$ satisfying the following \emph{symmetric axiom}:
\begin{itemize}
  \item if $F_1$, $F_2\in \mathcal F$ and $x\in F_1\symmdiff F_2$, then there is $y\in F_2\symmdiff F_1$ such that $F_1 \symmdiff \{x,y\}\in\mathcal F$.
\end{itemize}
Here $\symmdiff$ denotes the symmetric difference of sets. The elements of $\mathcal F$ are called \emph{feasible sets}. Two delta-matroids $\Delta(E)$ and $\Delta(E')$ are isomorphic if there is a bijection $\psi\colon E\to E'$ preserving feasible sets. We shall use the notation $\Delta(E)\cong \Delta(E')$ to indicate that $\Delta(E)$ and  $\Delta(E')$ are isomorphic delta-matroids.
\end{dfn}

It is straightforward to show that every matroid is a delta-matroid, however not every delta-matroid is a matroid, as we shall see.

Delta-matroids, also known as symmetric or Lagrangian matroids \cite[ch. 4]{borovik_gelfand_white}, were first introduced by Bouchet \cite{bouchet87} and later generalized to the so-called \emph{Coxeter matroids} by Gelfand and Serganova \cite{gelfand_serganova87, gelfand_serganova87b}. A systematic treatment of Coxeter matroid theory can be found in \cite{borovik_gelfand_white}.

Delta-matroids arise from maps in a fashion similar to which graphic matroids arise from graphs. However, instead of spanning trees we shall consider \emph{bases} of maps. To that effect, let $\map$ be a map on $X$ with $n$ edges labelled by the set $E=\{1,2,\dots,n\}$. Label the edges of the dual map $\map^*$ by the set $E^*=\{1^*,2^*,\dots,n^*\}$ so that $j^*$ is the coedge corresponding to $j$. Call an $n$-subset $B$ of $E\cup E^*$ admissible if precisely one of $j$ or $j^*$ appears in it.
\begin{dfn}\normalfont An admissible $n$-subset $B$ of $E\cup E^*$ is called a \emph{base} if $X\setminus B$ is connected.\end{dfn}

It was shown in \cite[proposition 2.1]{bouchet89} that the bases of $\map$ are equicardinal and spanning, that is each base includes a spanning tree of the underlying graph of $\map$.

\begin{dfn}\normalfont A \emph{quasi-tree} is a map with precisely one face. A \emph{spanning quasi-tree of a map} $\map$ is a quasi-tree obtained from a base $B$ of $\map$ by ignoring the starred elements.\end{dfn}

\begin{rmrk}\normalfont We are allowing the case of an empty spanning quasi-tree. This occurs precisely when there is a base $B=E^*$. In that case, $X\setminus E^*$ is connected and therefore $\map^*$ has precisely one face. Hence $\map$ has only one vertex and we think of the empty spanning quasi-tree as the degenerate map on the sphere with one vertex and no edges.\end{rmrk}


Let $\mathcal B$ denote the collection of bases of a map $\map$, and let $\mathcal F$ denote the collection of the spanning quasi-trees of $\map$, that is the collection
\[\mathcal F=\{E\cap B\mid B\in\mathcal B\}.\]
Analogously to matroids, the spanning quasi-trees of a map form a delta-matroid \cite[th. 4.3.1]{borovik_gelfand_white}.
\begin{thm}If $\map$ is a map on $X$, then $\mathcal F$ is the collection of feasible sets of a delta-matroid.\end{thm}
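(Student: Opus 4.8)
The plan is to verify directly that $\mathcal F$ satisfies the symmetric axiom of Definition \ref{definition: delta-matroid}. Nonemptiness is immediate: any spanning tree $T$ of the underlying graph of $\map$, embedded by restricting the embedding of $\map$, has a disk as a regular neighbourhood and hence a single boundary component, so $T$ is itself a spanning quasi-tree and $T\in\mathcal F$. The work is therefore entirely in the exchange property.

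First I would fix the bookkeeping. By the defining correspondence, a feasible set $F\in\mathcal F$ is exactly $E\cap B_F$ for the base $B_F=F\cup\{\,j^*\mid j\in E\setminus F\,\}$, and conversely every base arises this way; toggling an element of $E$ in a feasible set corresponds precisely to swapping the pair $j\leftrightarrow j^*$ in the associated admissible set. Thus the symmetric axiom becomes a statement about bases: given bases $B_1,B_2$ and an index $x$ at which they disagree (one contains $x$, the other $x^*$), the admissible set $B_1\symmdiff\{x,x^*\}$ can be restored to a base by swapping at most one further pair $y\leftrightarrow y^*$. Throughout I may invoke the cited fact (Bouchet \cite[prop. 2.1]{bouchet89}) that all bases are equicardinal and spanning, which lets me speak interchangeably about bases and genuine one-face spanning subgraphs.

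The crux is the exchange step, which I would first understand topologically. Starting from the spanning quasi-tree $F_1$, toggling the edge $x$ yields $F_1\symmdiff\{x\}$, which is no longer a spanning quasi-tree: either its underlying graph becomes disconnected, or the single boundary circle of $F_1$ is split, so that it acquires a second face. In ribbon-graph terms an edge is traversed twice by the boundary, and toggling it either merges or separates those two traversals, changing the face count by exactly one (an Euler-characteristic parity argument rules out any other change). The task is then to find a second toggle $y\in F_1\symmdiff F_2$ that restores both connectivity and the one-face condition, so that $F_1\symmdiff\{x,y\}\in\mathcal F$. The symmetric difference $F_1\symmdiff F_2$ is the natural search space, because the second quasi-tree $F_2$ witnesses that the configuration can be reconnected using edges on which $F_1$ and $F_2$ differ.

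The main obstacle is precisely guaranteeing the existence of this repairing element $y$, and I expect the cleanest rigorous engine for it to be the $\mathbb F_2$-representation of the delta-matroid rather than an ad hoc surgery. Concretely, fixing one spanning quasi-tree $F_0$ and recording how the remaining edges interlace along its single boundary yields a symmetric matrix $A$ over $\mathbb F_2$ for which the feasible sets are exactly the sets $F$ whose principal submatrix $A[F\symmdiff F_0]$ is nonsingular. The symmetric axiom then reduces to the linear-algebra lemma that principal-nonsingularity of a symmetric (or skew-symmetric) matrix defines a delta-matroid, proved by a principal-pivot (Schur-complement) computation: if $A[F_1\symmdiff F_0]$ and $A[F_2\symmdiff F_0]$ are nonsingular and $x\in F_1\symmdiff F_2$, pivoting on $F_1$ reduces the question to a single row-and-column statement that exhibits the required $y$. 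I would therefore present the topological reduction above to isolate the combinatorial content and discharge the exchange axiom through this representability argument, recovering \cite[th. 4.3.1]{borovik_gelfand_white} without appealing to the general Coxeter-matroid machinery.
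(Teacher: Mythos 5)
Your plan is correct in outline, but note that the paper itself does not prove this theorem at all: it is quoted directly from \cite[th.~4.3.1]{borovik_gelfand_white}, and the surrounding discussion points to the route taken there, namely a representation of the bases by an $n\times 2n$ matrix over $\mathbb Q^E\oplus\mathbb Q^{E^*}$ built from one known base (th.~4.3.5 of the same book), so that symmetric exchange is inherited from representability over $\mathbb Q$. Your argument is a genuinely different route to the same representability engine: you work over $\mathbb F_2$, twisting by a fixed spanning quasi-tree $F_0$ and characterising feasible sets as the $F$ with $A[F\symmdiff F_0]$ nonsingular for the interlacement matrix $A$, then discharging the symmetric axiom by the principal-pivot (Tucker/Schur-complement) lemma. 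This is essentially Bouchet's binary representability of ribbon-graphic delta-matroids, and it is worked out in exactly this quasi-tree form in \cite{moffat14}, which the paper already cites. What your route buys: after passing to the one-vertex picture every subset of chords is automatically spanning and connected, so the connectivity bookkeeping that plagues a direct surgery argument disappears, and the exchange axiom becomes pure linear algebra. What the paper's cited route buys: the rational (homological) representation exhibits the Lagrangian/symplectic structure directly and does not need the boundary-interlacement dictionary.

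Three points of precision you should tighten. First, the assertion that feasibility is equivalent to nonsingularity of $A[F\symmdiff F_0]$ is where all the topology lives (it is a Cohn--Lempel/Zulli-type boundary-component count, equivalent to the theorem for one-vertex maps), so it must either be proved by the usual induction on chords, using your observation that toggling one edge changes the boundary count by exactly one, or be cited explicitly; as written it is asserted with the same depth as the statement being proved. Second, $A$ must be indexed by \emph{all} of $E$, not just the ``remaining edges'': elements of $F_0$ occur in $F\symmdiff F_0$, and they appear in the interlacement picture as the doubly traversed arcs of the single boundary circle of $F_0$ (equivalently, as chords of the partial dual with respect to $F_0$). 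Third, for nonemptiness, being abstractly embeddable with one face is not the definition of feasible here; you need that $B=T\cup\{j^*\mid j\in E\setminus T\}$ has connected complement in $X$ for a spanning tree $T$ --- standard, and covered by the facts you already import from \cite[prop.~2.1]{bouchet89}, but worth saying.
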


The delta-matroid arising from a map $\map$ shall be denoted by $\Delta(\map)$ or $\Delta(D)$ when we are assuming that $D$ is a clean dessin.

\begin{exmp}\normalfont Let $\map$ be a map on a genus 1 surface $X$ with two vertices, three edges and one face, as shown and labelled in figure \ref{figure8}. Since the map itself has precisely one face, then $X\setminus \map$ must be connected. Therefore 123 is a base. It is easy to see that no 2-subset of 123, together with an appropriate coedge, is a base. The remaining admissible $3$-sets are $12^*3^*$, $1^*23^*$, $1^*2^*3$ and $1^*2^*3^*$. Out of those four, only $12^*3^*$ and $1^*2^*3$ do not disconnect $X$. Therefore, the feasible sets are $123$, $1$, $3$.
\begin{figure}[ht]
  \centering
\includegraphics[scale=.75,trim={7cm 16.2cm 7cm 5cm}]{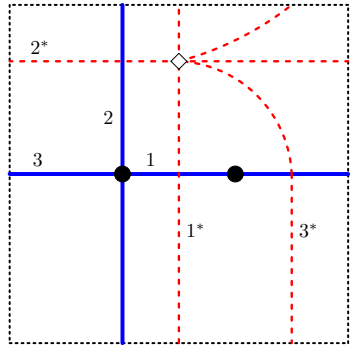}
  \caption{The bases of the map are 123, $12^*3^*$ and $1^*2^*3$. Hence $\Delta(\map)=\{123, 1,3\}$. The edges 1 and 3 are the spanning quasi-trees of $\map$ which can be embedded as maps only on the sphere.}\label{figure8}
\end{figure}

In general one does not need to go through all possible admissible $n$-subsets of $E\cup E^*$ and check which ones are bases. It is enough to find one base which can then be used to find the \emph{representation} of the delta-matroid as an $n$ by $2n$ matrix over $\mathbb Q^E\oplus\mathbb Q^{E^*}$. The linearly independent admissible $n$-sets of columns of the representation will correspond to the bases of the map \cite[theorem 4.3.5]{borovik_gelfand_white}. However, we shall not consider representations of delta-matroids in this paper.
\end{exmp}

We note that the definition \ref{definition: delta-matroid} can be modified so that a delta-matroid is specified by a collection of admissible $n$-sets \cite[section 4.1.2]{borovik_gelfand_white}. In that case we must replace $F_1$, $F_2$, $\mathcal F$, $x$, $y$ and $\{x,y\}$ with $B_1$, $B_2$, $\mathcal B$, $\{x,x^*\}$, $\{y,y^*\}$ and $\{x,x^*,y,y^*\}$, respectively. The reason that we chose our definition is due to the fact that if $\map$ is a map on the sphere, then its feasible sets correspond precisely to its spanning trees and therefore the delta-matroid in question is a matroid.

As in the case of matroids, there exists a notion of a dual delta-matroid.

\begin{prop}Let $\Delta(E)$ be a delta-matroid with $\mathcal F$ as its collection of feasible sets. Then the collection
\[\mathcal F^*=\{E\setminus F\mid F\in\mathcal F\}\] is the collection of feasible sets of some delta-matroid on $E$.
\end{prop}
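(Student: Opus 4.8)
The plan is to verify directly that $\mathcal F^*$ is non-empty and satisfies the symmetric axiom of Definition \ref{definition: delta-matroid}, reducing everything to two elementary identities about the interaction of complementation and symmetric difference. Non-emptiness is immediate: since $\mathcal F\neq\emptyset$, choosing any $F\in\mathcal F$ gives $E\setminus F\in\mathcal F^*$.

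The heart of the argument rests on two set-theoretic identities, both valid for subsets of the fixed ground set $E$. First, for any $A,B\subseteq E$,
\[(E\setminus A)\symmdiff(E\setminus B)=A\symmdiff B,\]
so that passing to complements leaves symmetric differences unchanged. Second, for any $F\subseteq E$ and any $S\subseteq E$,
\[(E\setminus F)\symmdiff S=E\setminus(F\symmdiff S),\]
so that complementation may be pushed through a symmetric difference. Both follow at once by a pointwise check using indicator functions and the fact that symmetric difference corresponds to addition modulo $2$; I would record them as a short lemma before the main verification.

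With these in hand the symmetric axiom for $\mathcal F^*$ follows mechanically. Take $F_1^*,F_2^*\in\mathcal F^*$, say $F_i^*=E\setminus F_i$ with $F_i\in\mathcal F$, and let $x\in F_1^*\symmdiff F_2^*$. By the first identity $F_1^*\symmdiff F_2^*=F_1\symmdiff F_2$, so $x\in F_1\symmdiff F_2$. Applying the symmetric axiom to $F_1,F_2\in\mathcal F$ produces some $y\in F_2\symmdiff F_1$ with $F_1\symmdiff\{x,y\}\in\mathcal F$. By the definition of $\mathcal F^*$ this gives $E\setminus(F_1\symmdiff\{x,y\})\in\mathcal F^*$, and the second identity rewrites this complement as $(E\setminus F_1)\symmdiff\{x,y\}=F_1^*\symmdiff\{x,y\}$. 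Finally $y\in F_2\symmdiff F_1=F_2^*\symmdiff F_1^*$, again by the first identity, so $y$ lies in the set demanded by the axiom. Thus $F_1^*\symmdiff\{x,y\}\in\mathcal F^*$ with $y\in F_2^*\symmdiff F_1^*$, which is precisely the symmetric axiom for $\mathcal F^*$.

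There is essentially no obstacle here beyond correctly bookkeeping the complements; the only point requiring care is that both identities presuppose every set involved to sit inside $E$, so that complementation is taken with respect to the same universe throughout. In particular one must note $\{x,y\}\subseteq E$, which holds because $x,y\in F_1\symmdiff F_2\subseteq E$. Granting this, the verification is routine and the proof is complete.
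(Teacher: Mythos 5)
Your proof is correct and follows essentially the same route as the paper, which disposes of the proposition by noting the single identity $F_1\symmdiff F_2=(E\setminus F_1)\symmdiff(E\setminus F_2)$; your second identity $(E\setminus F)\symmdiff S=E\setminus(F\symmdiff S)$ is just the bookkeeping the paper leaves implicit. Nothing is missing, and your more explicit write-up is a faithful expansion of the paper's one-line argument.
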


This proposition is easily seen to be true by noting that \[F_1\symmdiff F_2=(E\setminus F_1)\symmdiff (E\setminus F_2).\] The delta-matroid on $E$ with $\mathcal F^*$ as the collection of its feasible sets is called the \emph{dual delta-matroid of} $\Delta(E)$ and is denoted by $\Delta^*(E)$.

\begin{thm}\label{dualthm}Let $\map$ be a map and $\mathcal B$ the collection of its bases. Let $\map^*$ be its dual map and $\Delta(\map^*)$ the delta-matroid of $\map^*$. Then $\Delta^*(\map) \cong \Delta(\map^*)$.
\end{thm}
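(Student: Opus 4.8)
The plan is to use the fact that a map $\map$ and its geometric dual $\map^*$ live on the \emph{same} surface $X$, so that both delta-matroids are assembled from one and the same family of admissible sets, merely recorded against complementary halves of $E\cup E^*$.

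First I would isolate the structural input: geometric duality of maps is an involution, so $(\map^*)^*=\map$ and the coedge of a coedge satisfies $(j^*)^*=j$. Hence $\map^*$ is a map on $X$ whose edge set is $E^*$ and whose coedge set is $E$, and an $n$-subset of $E^*\cup E=E\cup E^*$ is admissible for $\map^*$ exactly when it is admissible for $\map$, since in both cases admissibility means choosing exactly one of $j,j^*$ for every $j$. Crucially, the defining clause ``$X\setminus B$ is connected'' deletes the very same $1$-cells from the very same surface whether $B$ is read as a candidate base of $\map$ or of $\map^*$. I would therefore conclude that the collection $\mathcal B$ of bases of $\map$ and the collection of bases of $\map^*$ are identical as families of subsets of $E\cup E^*$.

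Next I would convert this coincidence into a statement about feasible sets. By definition the feasible sets of $\Delta(\map^*)$, taken inside its ground set $E^*$, are the sets $E^*\cap B$ with $B\in\mathcal B$, whereas the feasible sets of $\Delta^*(\map)$ are the sets $E\setminus(E\cap B)$ with $B\in\mathcal B$. The natural bijection to test is $\psi\colon E\to E^*$, $\psi(j)=j^*$. For admissible $B$ and $F=E\cap B$, admissibility gives $j\notin B$ if and only if $j^*\in B$, so $E\setminus F=\{\,j : j^*\in B\,\}$ and therefore $\psi(E\setminus F)=\{\,j^* : j^*\in B\,\}=E^*\cap B$. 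Thus $\psi$ carries the feasible sets of $\Delta^*(\map)$ bijectively onto those of $\Delta(\map^*)$, which is precisely the required isomorphism $\Delta^*(\map)\cong\Delta(\map^*)$. The degenerate cases $B=E$ and $B=E^*$ are absorbed automatically, since $\psi(\emptyset)=\emptyset$ and $\psi(E)=E^*$.

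The one point demanding genuine care---and the step I expect to be the main obstacle---is verifying that ``$X\setminus B$ is connected'' is truly symmetric in $E$ and $E^*$: that the deletion underpinning a base of $\map$ is the identical topological operation as the one underpinning a base of $\map^*$. This hinges on the involutivity $(\map^*)^*=\map$ and on the edges of $\map$ together with their coedges forming a single fixed configuration of curves on $X$. Once this is nailed down, everything else is the purely set-theoretic relabelling above.
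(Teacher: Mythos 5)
Your proposal is correct and takes essentially the same route as the paper: both proofs rest on the observation that the bases of $\map$ and $\map^*$ coincide as admissible $n$-subsets of $E\cup E^*$, and then show via the relabelling bijection $\psi\colon j\mapsto j^*$ that $\psi(E\setminus F)=E^*\cap B$, identifying the feasible sets of $\Delta^*(\map)$ with those of $\Delta(\map^*)$. Your additional care in justifying the coincidence of the two collections of bases (through $(\map^*)^*=\map$ and the fact that both deletions happen on the same surface $X$) merely makes explicit a step the paper dispatches with ``clearly''.
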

\begin{proof}The bases of $\map$ and $\map^*$ clearly coincide. Therefore, the collection of feasible sets of $\Delta(\map^*)$ is
\[\mathcal F'=\{E^*\cap B\mid B\in\mathcal B\}.\]
If $F$ is a feasible set of $\Delta(\map)$, then $E\setminus F$ is a feasible set of $\Delta^*(\map)$, and we have
\begin{eqnarray*}
E\setminus F    & = & E\cap F^c = E\cap (B\cap E)^c\\
                & = & E\cap (B^c \cup E^*)=E\cap B^c\\
                & = & E\cap B^*,
\end{eqnarray*}
where $B^*$ is the admissible $n$-subset obtained from $B$ by starring and un-starring the un-starred and starred elements, respectively. Denote by $\psi\colon E\to E^*$ the bijection $\psi(i)=i^*$. From the computation above we have
\[\psi(E\setminus F)=\psi(E)\cap\psi(B^*)=E^*\cap B.\]
Hence $\Delta^*(\map)$ and $\Delta(\map^*)$ are isomorphic. Moreover, by relabelling the edges of $\map^*$ with the elements of $E$ we can even achieve equality between the two delta-matroids.
\end{proof}

If we recall that for plane maps the feasible sets correspond to spanning trees, we immediately recover theorem \ref{theorem: whitney}. In other words, a delta-matroid $\Delta(\map)$ is a matroid if, and only if $\map$ is a plane map.

\section{Galois action on the delta-matroids of maps}\label{section: Galois action on matroids}

Since delta-matroids do not take into account the bipartite structure of dessins, throughout this section we shall consider maps only. Nevertheless, this restriction is not a significant one, as established by the following corollary \cite[p.~50]{schneps94} to theorem \ref{belyi}.

\begin{cor}Let $X$ be an algebraic curve defined over $\mathbb{C}$. Then $X$ is defined over $\algbr$ if, and only if there is a clean \belyi map $f\colon X\to\rsphere$.
\end{cor}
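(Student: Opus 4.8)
The plan is to prove the two implications separately, with only the forward direction requiring any work. If a clean \belyi map $f\colon X\to\rsphere$ exists, then in particular $f$ is a \belyi map, so by theorem \ref{belyi} the curve $X$ is defined over $\algbr$; this settles one direction immediately.

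For the converse, suppose $X$ is defined over $\algbr$. By theorem \ref{belyi} there is a \belyi map $f\colon X\to\rsphere$ ramified at most over $\{0,1,\infty\}$, but \emph{a priori} the ramification orders over $1$ need not all equal $2$. The idea is to post-compose $f$ with a fixed rational ``cleaning'' map $g\colon\rsphere\to\rsphere$ defined over $\mathbb Q$, so that $g\circ f$ remains defined over $\algbr$ while acquiring the clean property. I would take
\[g(z)=4z(1-z),\]
a degree-$2$ rational map with $g(0)=g(1)=0$, $g(\infty)=\infty$, and $g(z)-1=-(2z-1)^2$, so that $g$ is ramified precisely over $1$ (doubly, at $z=1/2$) and over $\infty$ (doubly, at $z=\infty$), with $\inv g(1)=\{1/2\}$.

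The verification then splits into two checks, both handled by the multiplicativity of ramification indices under composition. First, $g\circ f$ is still a \belyi map: a critical value of $g\circ f$ is either the image under $g$ of a critical value of $f$ --- and $g(\{0,1,\infty\})=\{0,\infty\}\subseteq\{0,1,\infty\}$ --- or a critical value of $g$ itself, namely $1$ or $\infty$; in either case it lies in $\{0,1,\infty\}$. Second, $g\circ f$ is clean: a point $x\in X$ lies over $1$ under $g\circ f$ exactly when $f(x)\in\inv g(1)=\{1/2\}$, and since $1/2\notin\{0,1,\infty\}$ the map $f$ is unramified there, so the ramification order of $g\circ f$ at $x$ equals the ramification order of $g$ at $1/2$, which is $2$. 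Hence every point over $1$ has ramification order exactly $2$.

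Finally, since $f$ is defined over $\algbr$ and $g$ over $\mathbb Q\subseteq\algbr$, the composition $g\circ f$ is defined over $\algbr$, completing the construction of a clean \belyi map. I expect no genuine obstacle here: the only point requiring care is confirming that post-composition with $g$ neither introduces critical values outside $\{0,1,\infty\}$ nor leaves any point over $1$ with ramification order different from $2$, and both follow from tracking ramification indices through the composition as above.
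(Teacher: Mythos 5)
Your proof is correct and takes essentially the same approach as the paper: the paper's argument is precisely that if $\vartheta$ is a \belyi function then $4\vartheta(1-\vartheta)$ is a clean \belyi function on the same curve, which is exactly your post-composition with $g(z)=4z(1-z)$. You merely make explicit the ramification bookkeeping (critical values of $g\circ f$ land in $\{0,1,\infty\}$, and every point over $1$ has ramification order exactly $2$) that the paper leaves implicit.
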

This corollary is due to the fact that if $\vartheta\colon X\to\rsphere$ is a \belyi function, then $f=4\vartheta(1-\vartheta )$ is a clean \belyi function on the same curve $X$. The dessin to which it corresponds is a familiar one: it is the dessin obtained from $(X,\vartheta)$ by colouring all the white vertices black and adjoining the edge midpoints as the white vertices.

As we have seen, delta-matroids of maps are defined through a topological property, namely connectedness, and therefore we cannot expect that conjugate maps will have isomorphic delta-matroids. This indeed is the case, as we will see in the following examples.

\begin{exmp}\normalfont \label{exmp51}Let $A$, $B_+$ and $B_-$ be the three genus 0 clean dessins depicted in figure \ref{figure9} with \belyi functions
\[f(z)=16 \frac{(391+550\nu+455\nu^2)(z+2\nu)(z+1)^2z^5}{(16z-\nu+7\nu^2-4)(-8z+4\nu+3\nu^2-4)^2},\]
where $\nu$ is a root of the irreducible polynomial
\[7\nu^3+2\nu^2-\nu-4.\]
The dessin $A$ corresponds to its real root, while $B_+$ and $B_-$ correspond to its imaginary roots with positive and negative real parts, respectively \cite[fig. 87-89]{adrianov09}. Clearly, any two are conjugate.
\begin{figure}[ht]
  \centering
  \includegraphics[scale=.5,trim={7cm 15cm 7cm 4.5cm}]{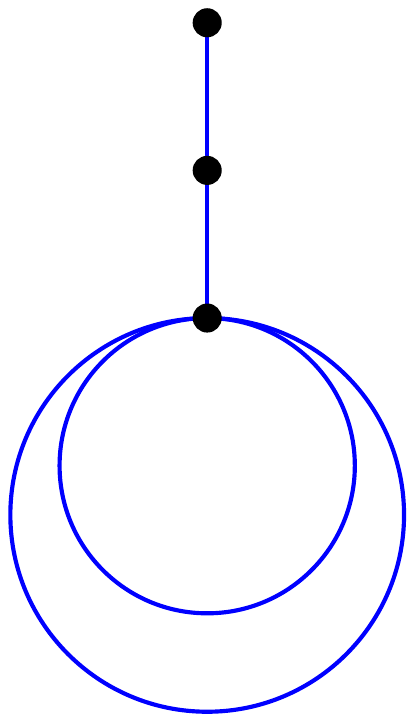}
  \includegraphics[scale=.5,trim={7cm 15cm 7cm 4.5cm}]{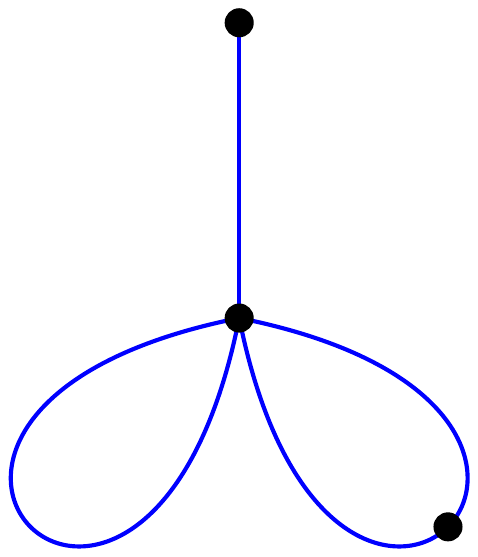}
  \includegraphics[scale=.5,trim={7cm 15cm 7cm 4.5cm}]{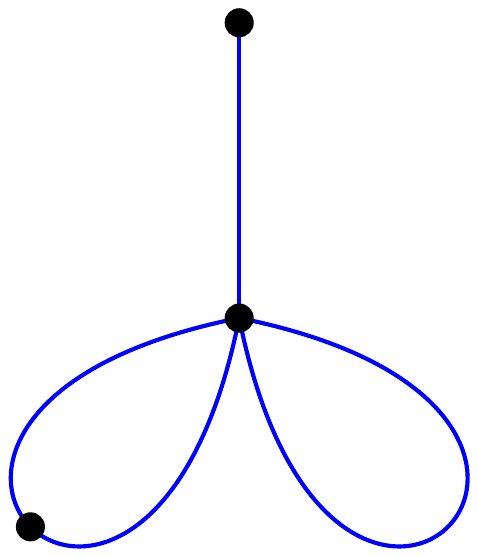}
  \caption{From left to right: dessins $A$, $B_+$ and $B_-$.}\label{figure9}
\end{figure}

Since these dessins are plane maps, their delta-matroids are matroids and the feasible sets are their spanning trees. Dessins $B_+$ and $B_-$ clearly have isomorphic delta-matroids with two feasible sets, while $A$ has only one feasible set.
\end{exmp}

\begin{exmp}\normalfont Let us look at some delta-matroids which are not matroids. Let $A_+$, $A_-$ and $B$ be the three genus 1 clean dessins as depicted and labelled in figure \ref{figure10}.
\begin{figure}[ht]
  \centering
  \includegraphics[scale=.4,trim={6cm 14cm 6cm 4.75cm}]{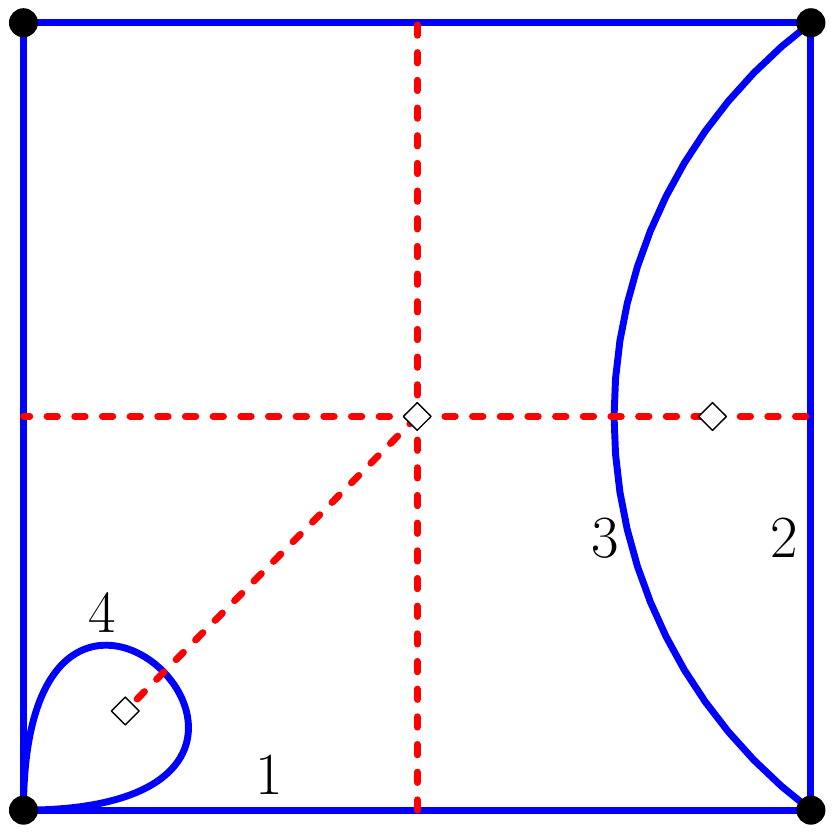}
  \includegraphics[scale=.4,trim={6cm 14cm 6cm 4.75cm}]{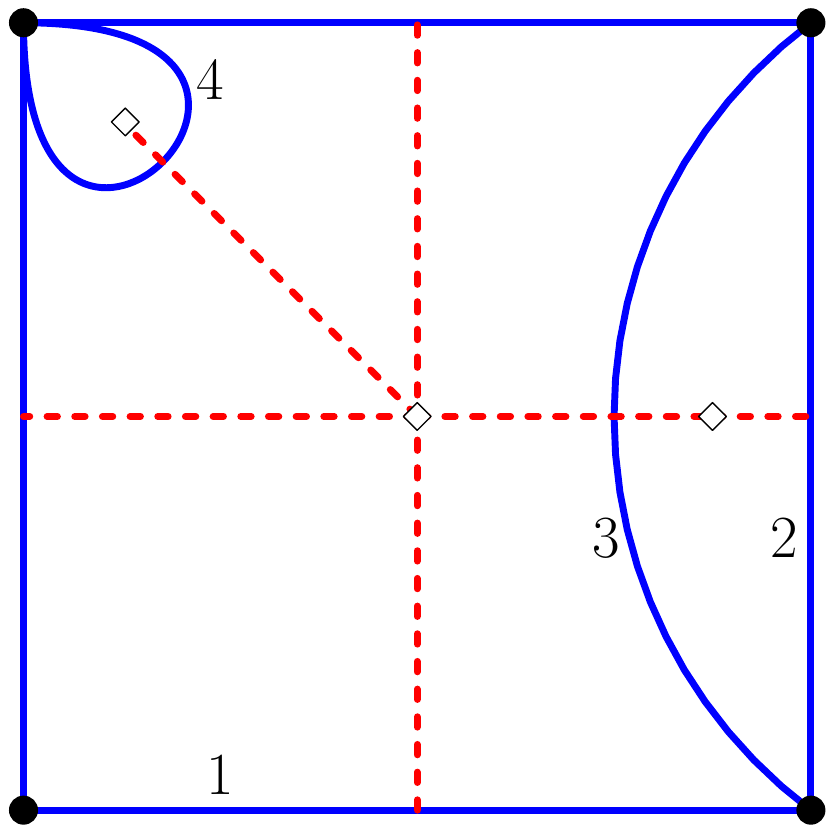}
  \includegraphics[scale=.4,trim={6cm 14cm 6cm 4.75cm}]{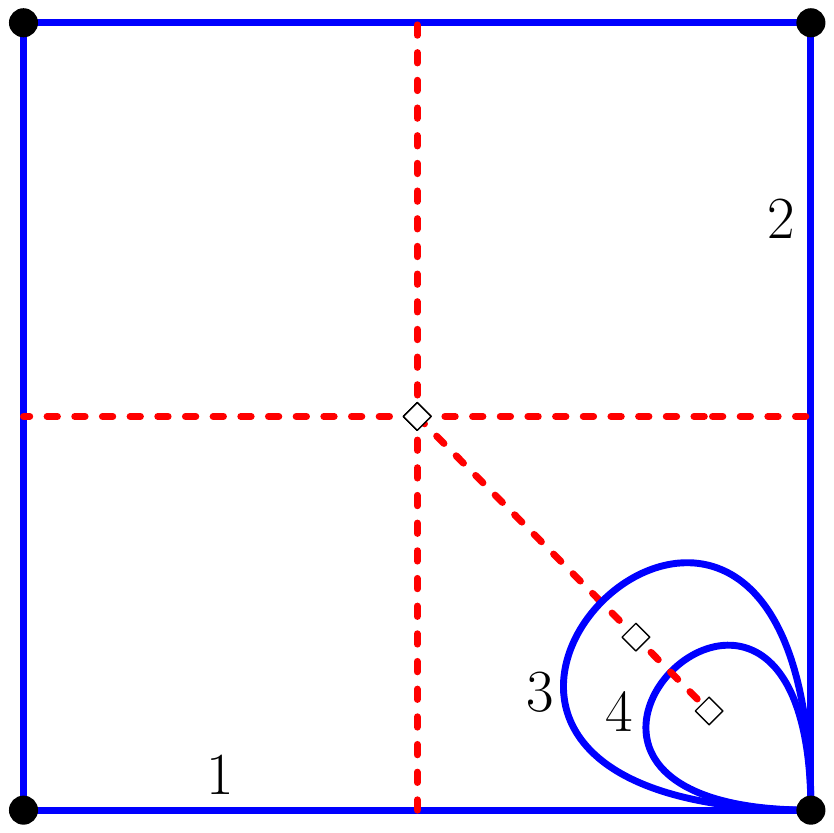}
  \caption{From left to right: dessins $A_+$, $A_-$ and $B$.}\label{figure10}
\end{figure}
The \belyi pairs of the three dessins have coefficients in the fixed field corresponding to the Galois group of the irreducible polynomial
\[256\nu^3-544\nu^2+1427\nu-172,\] and any two are conjugate. Similarly to the previous example, the dessin $B$ corresponds to the \belyi pair defined over $\mathbb R$ while the \belyi pairs for $A_+$ and $A_-$ are complex-conjugate. Due to the complicated expressions involved, we shall omit the equations for the \belyi pairs. However, the reader may look them up in \cite[pp.~39--40]{adrianov09}.

The bases of $A_+$ and $A_-$ are $123^*4^*$, $12^*34^*$ and $1^*2^*3^*4^*$ hence the feasible sets are $12$, $13$ and $\emptyset$. However, $B$ has only two bases, namely $123^*4^*$ and $1^*2^*3^*4^*$ and therefore has only two feasible sets: $12$ and $\emptyset$. The reason why delta-matroids fail to be Galois invariant is illustrated clearly in this example: a delta-matroid takes into account the topology of edges and hence distinguishes between non-contractible and contractible loops on the surface whereas $\absgal$ does not!
\end{exmp}

\subsection{Trivial delta-matroidal Galois invariants}\label{subsection: trivial}

The simplest dessins are the trees, that is genus 0 dessins with precisely one face. As we have already mentioned in the last paragraph before section \ref{subsection: invariants}, the action of $\absgal$ on the set of trees is very rich since it is faithful. However, delta-matroids associated to trees do not reveal much information as every tree has precisely one feasible set, the tree itself.

Similarly, $\absgal$ will preserve the delta-matroid of a genus 0 dessin which has $n$ faces of degree 1 and one face of arbitrary degree. Such a dessin is a tree with $m$ loops attached to it. Again, every such dessin clearly has only one feasible set, namely the tree obtained by removing the $m$ loops. Therefore, we have the following proposition.

\begin{prop}Let $D$ be a genus 0 clean dessin which is either
\begin{itemize}
  \item[(i)] a tree,
  \item[(ii)] a tree with $m$ degree 1 faces attached, or
  \item[(iii)] the dual dessin of a dessin of type $(i)$ or $(ii)$.
\end{itemize}
If $D'$ is a dessin conjugate to $D$, then $\Delta(D')\cong\Delta(D)$.
\end{prop}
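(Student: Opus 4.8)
The plan is to reduce the whole statement to one elementary fact about delta-matroids: a delta-matroid whose collection of feasible sets is a \emph{singleton} $\{F\}$ is determined up to isomorphism by the two numbers $|E|$ and $|F|$, since any bijection $E\to E'$ carrying $F$ to a feasible set $F'$ with $|F'|=|F|$ is automatically a delta-matroid isomorphism. So it suffices to check, in each case, that $\Delta(D)$ has exactly one feasible set, that a conjugate $D'$ again has exactly one feasible set, and that the two feasible sets are equicardinal. The ground sets automatically have the same size $n$, the number of edges, which is recorded in the passport and is therefore Galois invariant by Invariant~\ref{passport}.

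First I would treat cases (i) and (ii) together. Both dessins are of genus $0$, so by the remark following Theorem~\ref{dualthm} their delta-matroids are matroids and the feasible sets are precisely the spanning trees, each of cardinality $V-1$ where $V$ is the number of vertices. A tree is its own unique spanning tree; in a tree with $m$ loops attached no loop can lie in a spanning tree, so again there is a single spanning tree. The work is then to show a conjugate $D'$ stays in the same class. Genus, the number of faces, and the face-degree partition $\lambda_\varphi$ are all encoded in the passport, so $D'$ is again of genus $0$ with the same face data. In case (i), ``genus $0$ with a single face'' forces a tree by Euler's formula, since $V-E+1=2$ gives $V=E+1$. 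In case (ii) I would use that a degree-$1$ face of a map is a monogon bounded by a loop; hence a genus-$0$ map with face partition $1^m d$ carries $m$ loops whose deletion (each loop is non-separating as an edge, so deleting it merges its monogon into the adjacent face) leaves, by an Euler-characteristic count, a connected one-face genus-$0$ map, that is a tree. Thus $D'$ is again a tree with $m$ loops and has a single spanning tree. In both cases the unique feasible set has cardinality $V-1$, and $V$ is a passport invariant, so $\Delta(D')\cong\Delta(D)$.

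Case (iii) I would deduce from the first two together with duality. Writing $D=\map^{*}$ with $\map$ of type (i) or (ii), Theorem~\ref{dualthm} gives $\Delta(D)\cong\Delta^{*}(\map)$, so $\Delta(D)$ again has a single feasible set, namely the complement of the unique feasible set of $\map$. If $D'\simeq D$ then, since duality commutes with the Galois action (Invariant~\ref{duality}) and the geometric double dual of a map is the map itself, we have $D'^{*}\simeq D^{*}=\map$, which is of type (i) or (ii). The previous paragraph gives $\Delta(D'^{*})\cong\Delta(D^{*})$, and applying Theorem~\ref{dualthm} to the maps $D'$ and $D$ (whose duals are $D'^{*}$ and $D^{*}$), together with the observation that a bijection preserves feasible sets if and only if it preserves their complements, yields $\Delta(D')\cong\Delta^{*}(D'^{*})\cong\Delta^{*}(D^{*})\cong\Delta(D)$.

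The step I expect to be the main obstacle is the stability of the class under conjugation, and specifically the claim that $D'$ must again have exactly \emph{one} feasible set. This cannot come from a general principle, because Example~\ref{exmp51} and the genus-$1$ example following it show that the number of feasible sets (spanning trees) is emphatically \emph{not} a Galois invariant. The point is that for these low-complexity dessins the singleton property is \emph{forced} by the Galois-invariant passport data, namely genus $0$ together with the face-degree partition, via the identification of degree-$1$ faces with loops and a routine Euler-characteristic count. Verifying that identification carefully, and handling the degenerate configuration of a single loop on the sphere (two monogons sharing one edge), is the one place where genuine attention is needed.
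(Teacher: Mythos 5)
Your proposal is correct and follows essentially the same route as the paper's own proof: passport invariance forces a conjugate of $D$ to remain in class (i) or (ii), where the delta-matroid consists of a single feasible set of Galois-invariant cardinality, and case (iii) is reduced to the first two via Invariant~\ref{duality}, Theorem~\ref{dualthm} and the identity $(\Delta^*)^*=\Delta$. The only difference is that you spell out the monogon-equals-loop identification and the Euler-characteristic count behind the step the paper dismisses as trivial (including the degenerate single loop on the sphere, which is indeed just the one-vertex tree with one loop attached), and those details check out.
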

\begin{proof}In the cases (i) and (ii) the proof is trivial if we recall that the passport of a dessin is a Galois invariant. Hence
the conjugate dessin $D'$ must be of the same type as $D$ in both cases. Since the delta-matroids of those dessins are one and the same feasible set, namely the (underlying) tree, we must have $\Delta(D')\cong\Delta(D)$.

For $(iii)$, recall from invariant \ref{duality} that the duals of conjugate dessins are conjugate as well. Since $D^*$ is of type (i) or $(ii)$ we have $\Delta(D'^*)\cong\Delta(D^*)$. Combining with theorem \ref{dualthm} we have
\[\Delta^*(D')=\Delta(D'^*)\cong \Delta(D^*) = \Delta^*(D).\]
Now by noting that $(\Delta^*)^*=\Delta$, we recover $\Delta(D')\cong\Delta(D)$.\end{proof}

As we have seen in example \ref{exmp51}, the case (ii) cannot be improved even to trees with only one degree 2 face attached. The following conjugate dessins found in \cite{wolfart06} show that case (i) cannot be extended to quasi-trees.

\begin{exmp}\normalfont Let $T_5$ denote the fifth Chebyshev polynomial of the first kind and consider its square
\[T_5^2(x)=25 x^2-200 x^4+560 x^6-640 x^8+256 x^{10}.\]
This polynomial is a clean \belyi map with critical points in the set
\[\left\{0,\frac{1\pm\sqrt 5}{4},\frac{-1\pm\sqrt 5}{4},\sqrt{\frac{5\pm\sqrt 5}{8}},\sqrt{\frac{-5\pm\sqrt 5}{8}}\right\}.\]
Therefore, if $X$ is the algebraic curve
\[y^2=(x-1)(x+1)\left(x-\sqrt{\frac{5+\sqrt 5}{8}}\right),\]
then the composition $t=T_5^2 \circ \pi_x$, where $\pi_x\colon X\to\rsphere$ is the projection to the first coordinate, is a clean \belyi map. Clearly $D=(X, t)$ will have precisely one face since $\inv t(\infty)=\{\infty\}$, as we can see in figure \ref{figure11}.
\begin{figure}[ht]
  \centering
  \includegraphics[scale=.7,trim={5cm 18cm 5cm 4.85cm}]{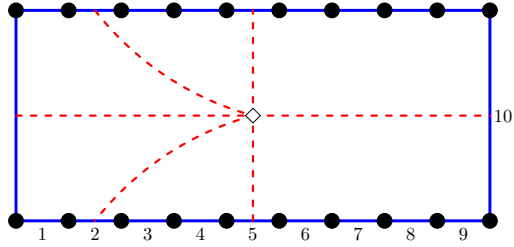}
  \caption{The dessin $(X,t)$. The only feasible set containing 10 is the entire dessin. Any two coedges $1\leq e,f\leq 9$ disconnect $X$ so other feasible sets must be of the form $1\cdots \hat e \cdots 9$, where $\hat e$ is omitted.}\label{figure11}
\end{figure}

Let $D$ be labelled as in figure \ref{figure11} and let $B$ be a base of $D$. If the edge 10 is in $B$ then no coedges can appear since cuts along the two edges 10 and $e^*$, for any $e\in\{1,\dots, 9\}$, will clearly disconnect $X$. Therefore, $B=12\cdots10$ is the only base containing the edge 10. On the other hand, if $10^*$ is in $B$ then at least one coedge $e^*\in\{1^*,\dots,9^*\}$ must appear since $1\cdots9(10)^*$ disconnects $D$. But if two or more coedges in $\{1^*,\dots, 9^*\}$ appear in $B$ then $D$ will again be disconnected. Therefore, $\Delta(D)$ has precisely 10 feasible sets, namely $12\cdots 10$ and $1\cdots \hat e\cdots 9$, where $\hat e$ denotes the omission of $e\in\{1,2,\dots 9\}$.

Now let $\theta$ be an automorphism in $\absgal$ such that
\[\theta\colon\sqrt{\frac{5+\sqrt 5}{8}}\mapsto\sqrt{\frac{5-\sqrt 5}{8}}.\]
Since $T_5^2$ is defined over the rationals, then $(T_5^2)^\theta$ coincides with $T_5^2$ and therefore $t^\theta$ and $t$ coincide as well. However, $X^\theta$, which is given by
\[y^2=(x-1)(x+1)\left(x-\sqrt{\frac{5-\sqrt 5}{8}}\right),\]
is a curve not isomorphic to $X$. Hence $D^\theta$ and $D$ are non-isomorphic conjugate dessins. The corresponding map is shown in figure \ref{figure12}.
\begin{figure}[ht]
  \centering
  \includegraphics[scale=.7,trim={5cm 18cm 5cm 4.85cm}]{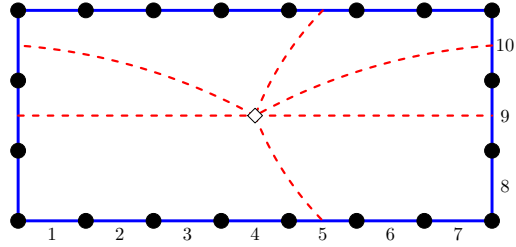}
  \caption{The dessin $D^\theta=(X^\theta,t)$. There are at least 18 feasible sets obtained by adjoining $1\cdots \hat e\cdots 7$, where $\hat e$ is omitted, to 89, 8(10) or 9(10).}\label{figure12}
\end{figure}

Let $D^\theta$ be labelled as in figure \ref{figure12} and $B$ a base of $D^\theta$. If the edges 8, 9 and 10 are in $B$, then $B$ must be the entire dessin. Now suppose that 8, 9 and $10^*$ are in $B$. Then the rest of $B$ must be of the form $1\cdots \hat e \cdots 7$, where $\hat e\in\{1,\dots 7\}$ is omitted. We can conclude the same for bases that contain $8$, $9^*$, $10$ or $8^*$, $9$, $10$. Therefore $\Delta (D^\theta)$ has at least 19 feasible sets and cannot be isomorphic to $\Delta(D)$.
\end{exmp}

\begin{qst}\normalfont As we have seen, $\absgal$ alters significantly the delta-matroids of conjugate dessins. In the cases where the delta-matroid is preserved, most information about the dessin is not captured. Is there an interesting family of dessins for which delta-matroids could provide some useful information?
\end{qst}

\subsection{Self-duality of maps and matroids}\label{subsection: self duality}
Recall that a map is self-dual if it is isomorphic to its dual. As an example, any map in figure \ref{figure9} is self-dual.

We say that a delta-matroid is \emph{self-dual} if $\Delta(E)\cong \Delta^*(E)$. Combining with theorem \ref{dualthm}, the delta-matroid of a map $\map$ is self-dual if, and only if $\Delta(\map)\cong \Delta(\map^*)$.

Self-dual maps clearly have self-dual delta-matroids. The following example demonstrates that the converse need not be true.

\begin{exmp}\normalfont Consider the map in figure \ref{figure13}. It is not self-dual since it has only one vertex of degree 1, while the dual map has two. However, both have precisely one feasible set corresponding to the unique spanning tree. Clearly their delta-matroids are isomorphic, as the two feasible sets are of the same size.
\begin{figure}[ht]
  \centering
  \includegraphics[scale=.5,trim={5cm 14.4cm 3cm 5.1cm}]{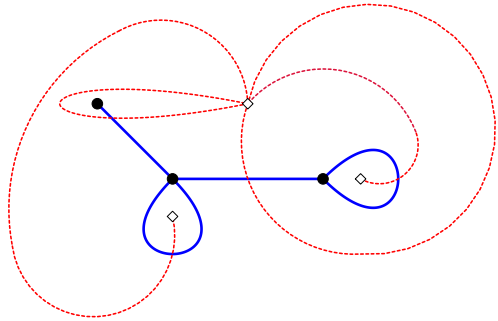}
  \caption{A map which is not self-dual but has a self-dual delta-matroid.}\label{figure13}
\end{figure}
\end{exmp}

By a theorem of Steinitz\footnote{Also, see 8.2.16 in \cite{oxley92}. There the same theorem is attributed to Whitney.} \cite[pp.~63]{mohar_thomassen01}, a 3-connected planar simple graph $G$ has, up to isomorphism, a unique embedding on the sphere. Moreover, if the delta-matroid of $G$ is self-dual, then $G$, as a planar map, is self-dual as well. Hence a 3-connected planar simple graph is self-dual as a map if, and only if its delta-matroid is self-dual. As we have mentioned in section \ref{subsection: invariants}, the property of being self-dual is a Galois invariant, and therefore the conjugates of 3-connected plane simple maps with self-dual delta-matroids must have a self-dual delta-matroid. Can the same be said, at least in the genus 0 case, for all clean dessins with self-dual delta-matroids? It is easy to see by inspecting the catalogue \cite{adrianov09} that this is the case for genus 0 dessins with 4 edges or less. However, this might be due to the simplicity of orbits involved; the largest orbit in the catalogue consists of only 3 dessins. Here we pose the following question.

\begin{qst}\normalfont \label{question: self-duality}Given a genus 0 clean dessin $D$, if the delta-matroid of $D$ is self-dual, does the same hold for any dessin conjugate to $D$?\end{qst}

Since in the genus 0 case the feasible sets of $D$ correspond to spanning trees, and if $v$ is the number of vertices, then any feasible set must have $v-1$ edges. Moreover, if $F$ is a feasible set of $D$, then $E\setminus F$ is a feasible set of $D^*$ and therefore $D$ must have $2v-2$ edges. Euler's formula now implies that the number $f$ of faces of $D$ has to be $f=v$. Therefore, if a counterexample is to be found, its passport should be of the following form
\[[a_1^{\alpha_1}\cdots a_j^{\alpha_j},2^{2v-2},b_1^{\beta_1}\cdots b_k^{\beta_k}],\]
with the following equalities satisfied:
\begin{eqnarray*}
\alpha_1+\cdots+\alpha_j &=& \beta_1+\cdots+\beta_k=v,\\
a_1\alpha_1+\cdots+a_j\alpha_j &=& b_1\beta_1+\cdots+b_k\beta_k=4v-4.
\end{eqnarray*}

In higher genus feasible sets are not all of the same size and therefore there are less constraints on the passport. This would suggest that a question analogous to question \ref{question: self-duality} is even less likely to have a positive answer.


\begin{qst}\normalfont Are there some other properties of delta-matroids that are invariant under the action of $\absgal$?
\end{qst}

\section{Partial duals and twists of delta-matroids}\label{section: partial duals}

A \emph{partial dual} of a map is a generalisation of the geometric dual of a map. It was first introduced in \cite{chmutov09} and later generalised to hypermaps in \cite{chmutov14}, where a representation as a triple of permutations is given as well. In this paper we shall first define partial duals combinatorially and then explain the geometric counterpart, thus working in the opposite direction of \cite{chmutov14}. We shall consider maps only but give some remarks on hypermaps as well. Throughout this section $D=\per$ will denote a clean dessin with $n$ edges, hence $\alpha$ will be of the form $\alpha=c_1\cdots c_n$, where $c_1, \dots, c_n$ are $n$ disjoint transpositions. We are identifying the edges of $D$ with the cycles of $\alpha$ so that the $j$-th edge corresponds to the transposition $c_j$. The notation $D/j$ stands for the map $D$ with the edge $j$ contracted, while $D\setminus j$ stands for the map $D$ with the edge $j$ deleted.

\begin{dfn}\normalfont Let $D=\per$ be a map. The \emph{partial dual with respect to an edge} $j$ of $D$ is the map
\[\partial_j D = (\sigma c_j, \alpha, c_j \varphi).\]
\end{dfn}

The following theorem shows that the partial dual with respect to an edge is well defined.

\begin{thm}\label{partial}Let $D=\per$ be a map. Then $\sigma c_j \alpha c_j \varphi=1$ and the group $\left\langle\sigma c_j, \alpha, c_j \varphi\right\rangle$ acts transitively on $\{1,\dots,2n\}$.
\end{thm}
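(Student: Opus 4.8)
The plan is to dispose of the product relation by a one-line computation and then to reduce transitivity of the new group to transitivity of the original one by a direct comparison of orbits. For the relation, note that since $c_1,\dots,c_n$ are pairwise disjoint transpositions, $c_j$ commutes with every factor of $\alpha=c_1\cdots c_n$, hence with $\alpha$ itself; together with $c_j^2=1$ this yields $c_j\alpha c_j=\alpha$. Therefore
\[
\sigma c_j\,\alpha\, c_j\varphi=\sigma(c_j\alpha c_j)\varphi=\sigma\alpha\varphi=1,
\]
the last equality being the relation satisfied by the original map.

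For transitivity, write $G=\left\langle\sigma,\alpha,\varphi\right\rangle$ and $G'=\left\langle\sigma c_j,\alpha,c_j\varphi\right\rangle$, and let $\sim$ denote the relation ``lies in the same $G'$-orbit''. Since $\varphi=(\sigma\alpha)^{-1}\in\left\langle\sigma,\alpha\right\rangle$, the transitive group $G$ equals $\left\langle\sigma,\alpha\right\rangle$, so it suffices to prove that $x\sim\sigma(x)$ and $x\sim\alpha(x)$ for every dart $x$. Indeed, this forces the entire $G$-orbit of each dart to be contained in its $G'$-orbit, and since $G$ is transitive on $\{1,\dots,2n\}$, so is $G'$.

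The key observation is that $\alpha\in G'$ has already identified the two darts of edge $j$. Writing $c_j=(a\,b)$, the only factor of $\alpha$ that moves $a$ is $c_j$, so $\alpha(a)=b$ and hence $a\sim b$; moreover $\alpha\in G'$ gives $x\sim\alpha(x)$ for all $x$ at once. For $\sigma$, I use that $\sigma c_j\in G'$ agrees with $\sigma$ except that it post-composes the transposition interchanging $a$ and $b$. Thus $x\sim\sigma(x)$ holds outright whenever $\sigma(x)\notin\{a,b\}$; and for the (at most two) darts with $\sigma(x)\in\{a,b\}$, the permutation $\sigma c_j$ lands on the \emph{other} element of $\{a,b\}$, whence $x\sim\sigma(x)$ follows after bridging across the pair via $a\sim b$. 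This establishes $x\sim\sigma(x)$ in all cases and completes the argument.

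The main obstacle is precisely this last piece of bookkeeping at the two darts of edge $j$, where $\sigma c_j$ fails to reproduce $\sigma$. Everything hinges on the fact that $\alpha$ has already merged $a$ and $b$ into a single $G'$-orbit, so the discrepancy introduced by the twist $c_j$ costs nothing. The same reasoning applies verbatim to $c_j\varphi$ in place of $\sigma c_j$, should one prefer to argue symmetrically with all three generators.
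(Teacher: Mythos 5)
Your proof is correct, and it takes a genuinely different route from the paper's. After the same one-line verification of $\sigma c_j\,\alpha\,c_j\varphi=1$ via $c_j\alpha c_j=\alpha$, the paper proves transitivity by surgery on the map: it sets $c_j=(1~2)$, splits into two cases according to whether the darts $1,2$ lie in distinct cycles $\sigma_1,\sigma_2$ of $\sigma$ or in a single cycle, analyses the (possibly disconnected) map $D\setminus j$ by restricting $\sigma$ and $\alpha$ to its components and using $(\sigma c_j)^2$ to hop from one component to the other, and in the second case splits the vertex, inserts an auxiliary edge on darts $\{2n+1,2n+2\}$, and reduces to the first case. You instead run a purely orbit-theoretic argument: since $\varphi=(\sigma\alpha)^{-1}$, transitivity of $G=\left\langle\sigma,\alpha\right\rangle$ is part of the hypothesis, and it suffices to check that the $G'$-orbits are closed under $\sigma$ and $\alpha$; closure under $\alpha$ is free because $\alpha\in G'$, and closure under $\sigma$ follows because $\sigma c_j\in G'$ differs from $\sigma$ only at the two darts sent into $\{a,b\}$, where the discrepancy is bridged by $a\sim b$ (which holds since the factors of $\alpha$ are disjoint, so $\alpha(a)=b$). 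Your version is shorter, needs no case analysis on the local structure of the map (it absorbs both of the paper's cases, and even $n=1$, uniformly), and never leaves the symmetric group; what the paper's longer argument buys in exchange is geometric content: its two cases track exactly what partial duality does to the embedded map (merging or splitting the vertex cycles $\sigma_1,\sigma_2$), and this bookkeeping is reused immediately afterwards in the geometric description of $\partial_j D$. One small point: the paper multiplies permutations left to right, so your ``post-composes $c_j$'' should be read as $x^{\sigma c_j}=(x^{\sigma})^{c_j}$; your bridging step is insensitive to this convention, so nothing in the argument is affected.
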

\begin{proof}Since $c_j$ commutes with $\alpha$ we clearly have $\sigma c_j\alpha c_j \varphi=1$. If $n=1$ we are done since in that case $\partial_j D$ corresponds to the geometric dual of $D$. Hence suppose that $n>1$.

Without loss of generality set $c_j=(1~2)$ and let $a,b\in\{1,\dots,2n\}$. If $(a~b)$ is a cycle in $\alpha$, then $a^\alpha = b$ and we are done. Otherwise, let $\sigma_1$ and $\sigma_2$ (with possibly $\sigma_1=\sigma_2$) be the cycles of $\sigma$ corresponding to the (black) vertices of $D$ incident to the darts $1$ and $2$, respectively. Since we are assuming $n>1$, the two cycles $\sigma_1$ and $\sigma_2$ cannot both be trivial and neither can be equal to $c_j$.

We may assume that $a,b\notin\{1,2\}$ as well since if, say, $a=1$ and $\sigma_1$ is not trivial, then $a^{\sigma c_j}\notin\{1,2\}$. If $\sigma_1$ is trivial, then
    \[a^{(\sigma c_j)^2}=2^{\sigma c_j}.\]
Since $\sigma_2$ is not trivial, we clearly must have $2^{\sigma c_j}\not\in\{1,2\}$.

\begin{inparaenum}
	\item[\emph{Case (i)}.]Suppose that $\sigma_1$ and $\sigma_2$ are disjoint. Consider the not necessarily connected map $D\setminus
    j=\hat D\cup \tilde D$ obtained from $D$ by deleting the edge $j$. Let $\hat\sigma$, $\hat\alpha$ and $\tilde\sigma$, $\tilde\alpha$ be the restrictions of $\sigma$ and $\alpha$ on $\hat D$ and $\tilde D$, respectively. Clearly $\hat \sigma$  coincides with the restriction of $\sigma c_j$ on $\hat D$, and similarly $\tilde\sigma$ coincides with the restriction of $\sigma c_j$ on $\tilde D$.

    If $a$ and $b$ both belong to the same connected component, say $\hat D$, then there is $\hat g\in\left\langle \hat\sigma,\hat\alpha \right\rangle$
    such that $a^{\hat g}=b$. If $\hat g$ is of the form
    \[\hat g=\hat \sigma^{ v_1}\hat\alpha^{ w_1}\cdots\hat\sigma^{ v_k}\hat\alpha^{ w_k},\]
    and since on $\hat D$ we have $\hat\sigma=\sigma c_j$ and $\hat\alpha=\alpha$, then for
    \[g=(\sigma c_j)^{ v_1}\alpha^{ w_1}\cdots(\sigma c_j)^{ v_k}\alpha^{ w_k}\]
    we must have $a^g=b$ as well.

    If $a$ belongs to $\hat D$ and $b$ to $\tilde D$, then suppose that the vertex that corresponds to $\sigma_1$ in $D$ is in $\hat D$. Let $d$ be a dart in $\hat D$ such that in the map $D$ we have $d^\sigma=1$. By repeating the previous argument, there is $g\in\interval{\sigma c_j,\alpha}$ such that $a^g=d$.
    By acting with $\sigma c_j$ on $d$ twice we first map $d$ to 2 and then to some dart in $\tilde D$. Therefore, $a^{g(\sigma c_j)^2}$ and $b$ are now both in $\tilde D$. By reusing the same argument as before we can find $h\in\interval{\sigma c_j,\alpha}$ such that
    \[a^{g(\sigma c_j)^2 h}=b.\]

    \item[\emph{Case (ii)}.] Suppose that $\sigma_1$ and $\sigma_2$ coincide, that is
    \[\sigma_1=\sigma_2=(1~p_1~\cdots p_r~2~q_1\cdots~q_s).\]
    The product $\sigma_1c_j$ will split $\sigma_1$ into two cycles $\sigma'_1$ and $\sigma'_2$ such that
    \begin{eqnarray*}
        \sigma'_1 &=& (1~p_1~\cdots p_r),\\
        \sigma'_2 &=& (2~q_1\cdots~q_s).
    \end{eqnarray*}
    Let $D'$ be the not necessarily connected map obtained from $D$ by splitting the vertex corresponding to $\sigma_1=\sigma_2$ so that the orderings of the darts around the two new vertices correspond to $\sigma'_1$ and $\sigma'_2$. By connecting the new vertices with an edge with darts labeled by $\{2n+1, 2n+2\}$, a connected map with $\sigma'_1$ and $\sigma'_2$ disjoint is obtained. Now case \emph{(ii)} follows from~\emph{(i)} by noting that $D$ and $D'$ with the new edge $(2n+1 ~ 2n+2)$ contracted are equivalent maps.
\end{inparaenum}
\end{proof}

\begin{rmrk}\normalfont \label{remark51}When $D$ is a general dessin, i.e. a bipartite map (or equivalently, a hypermap), and $c_j$ a cycle in $\alpha$, then the partial dual with respect to the $j$-th white vertex (equivalently, $j$-th hyperedge) is the bipartite map
\[\partial_j D = (\sigma c_j, \inv{c_j}\hat\alpha,c_j\varphi),\]
where $\hat \alpha$ denotes the permutation obtained from $\alpha$ by omitting the cycle $c_j$.
\end{rmrk}

The geometric interpretation of the partial dual $\partial_j D$ for $c_j=(1~2)$ is the following. Suppose that $n>1$ and $c_j$ is not a loop. Let $\sigma_1$ and $\sigma_2$ be the two cycles of $\sigma$ which contain 1 and 2, respectively. Draw the dual edge $j^*$ of $j$ by crossing $j$ at the white vertex. The coedge $j^*$ is incident to at most two face centers marked with $\diamond$ as before; draw a segment joining a face center to a black vertex of $j$ if, and only if the black vertex is on the boundary of the corresponding face. As a result, four triangles are formed. Using the orientation of the underlying surface of $D$ shade the two triangles with vertices oriented as $\bullet - \circ - \diamond - \bullet$. Exactly one of those triangles has the dart 1 as its side. Label the $\circ-\diamond$ segment of that triangle with $1^*$, and proceed similarly with the other triangle. See figure \ref{figure14}.

\begin{figure}[ht]
  \centering
  \includegraphics[trim={7cm 19cm 7cm 4.65cm}]{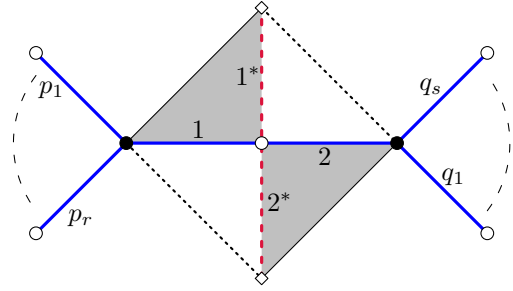}
  \caption{The darts of the coedge are labeled so that $i$ and $i^*$ are sides of the same shaded triangle, for $i=1,2$. Here $\sigma_1\sigma_2=(1~p_1\cdots~p_r)(2~q_1\cdots~q_s)$.}\label{figure14}
\end{figure}

Now contract $j$, and if $j^*$ is not already a loop, glue the endpoints of $j^*$ together and consider them as a single white vertex. If necessary, add a handle to the underlying surface of $D$ so that $(D/j)\cup\{j^*\}$ is a map. Then $\partial_j D$ is obtained by relabeling $j^*$, $1^*$ and $2^*$ into $j$, $1$ and $2$, respectively. The cycle corresponding to the new vertex is given by $\sigma_1\sigma_2c_j$. See figure \ref{figure15}.

\begin{figure}[ht]
  \centering
  \includegraphics[trim={4.85cm 18.5cm 4cm 4.5cm},scale=.85]{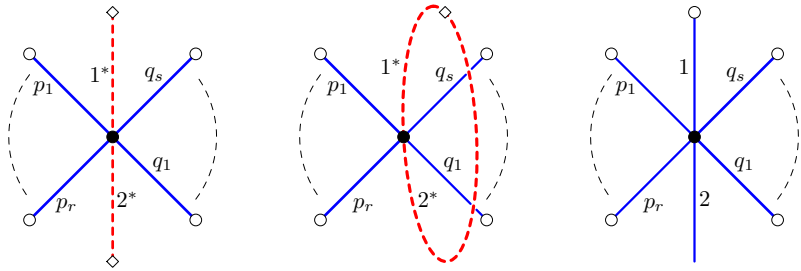}
  \caption{From left to right: contraction, then gluing of the endpoints and relabelling. By comparing with figure \ref{figure14} we see that $\sigma_1\sigma_2c_j=(1~p_1\cdots~p_r~2~q_1\cdots~q_s)$.}\label{figure15}
\end{figure}

If $c_j$ is a loop we proceed in the reverse direction. That is, first we break the loop at its white vertex so that the two endpoints fall onto some, possibly the same, face centers. If need be, remove a handle from the underlying surface. Then we split $\sigma_1=\sigma_2$ into two vertices and add an edge $j^*$ between them so that the former loop $j$ intersects it at its midpoint. Next we label the darts of $j^*$ as before. Finally, the partial dual is completed by deleting $j$ and relabeling $j^*$ to $j$ together with its darts. See figure \ref{figure16}.

\begin{figure}[ht]
  \centering
  \includegraphics[trim={8cm 12cm 5.2cm 4.5cm}, scale=.75]{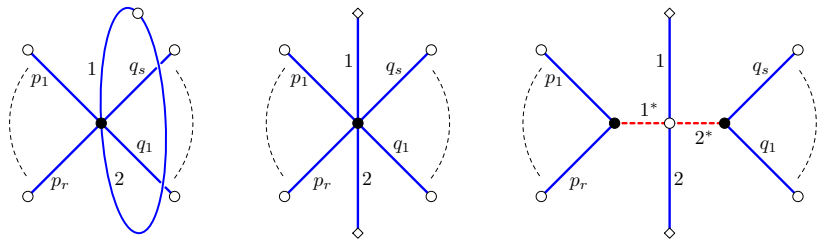}
  \caption{From left to right: a map with a cycle $\sigma_1 = \sigma_2 = (1~p_1\cdots~p_r~2~q_1\cdots~q_s)$. The loop is then broken at its white vertex and the two endpoints fall onto face centers. We split the vertex and add a new edge $j^*$. The final step is obtained by deleting $j$ and relabelling.  By comparing with figure \ref{figure15} we see that $\sigma_1c_j=\sigma_2c_j=(1~p_1\cdots~p_r)(2~q_1\cdots~q_s)$.}\label{figure16}
\end{figure}

\begin{exmp}\normalfont \label{exmp61}Let $D$ be the genus 0 dessin given by the triple
\[D=\big((1~4)(2~3), (1~2)(3~4), (1~3)(2~4)\big).\]
Let $c_1=(1 2)$. Then $\partial_1 D$ is the genus 1 dessin given by the triple
\[\partial_1 D=\big((1~4~2~3), (1~2)(3~4), (1~3~2~4)\big).\]
See figure \ref{figure17} for the geometric counterparts.
\begin{figure}[ht]
  \centering
  \includegraphics[trim={4.8cm 18.3cm 3cm 4.9cm}, scale=.75]{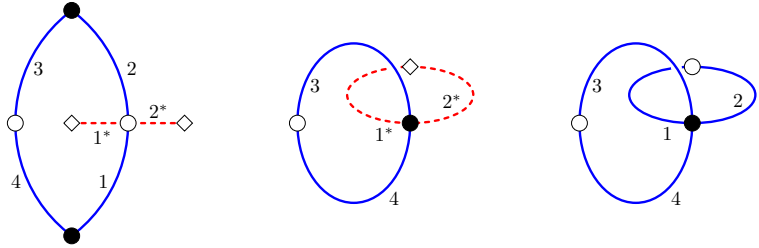}
  \caption{From left to right: the map $D$ from example \ref{exmp61}, an intermediate step, and its partial dual $\partial_1 D$.}\label{figure17}
\end{figure}
\end{exmp}

Since the cycles of $\alpha$ commute, the following is well defined.
\begin{dfn}\normalfont Let $D$ be a map, $E$ its set of edges and $S=\{i_1,\dots,i_k\}$ some subset of $E$. Then the partial dual of $D$ with respect to the set of edges $S$ is the map
\[\partial_S D=\partial_{i_k}\cdots\partial_{i_1}D=(\sigma c_{i_1}\cdots c_{i_k},\alpha, c_{i_k}\cdots c_{i_1}\varphi).\]
\end{dfn}

The geometric interpretation is immediately clear; the partial dual with respect to the set $S$ is obtained by dualising the edges in $S$ one at a time.

\begin{rmrk}\normalfont When $D$ is a general dessin, the partial dual with respect to some subset of hyperedges is obtained analogously to remark \ref{remark51}.
\end{rmrk}

The following lemma, borrowed directly from \cite{chmutov09,chmutov14}, lists some properties of the operation of partial duality.
\begin{lemma}\label{lemma: partial dual properties}Let $D$ be a map, $E$ its set of edges and $S$ some subset of $E$. Then
\begin{enumerate}[(a)]
  \item  $\partial_E D = D^*$
  \item  $\partial_S \partial_S D= D$.
  \item\label{lemma: partial dual properties: c}  If $j \in E\setminus S$, then $\partial_j\partial_S D =\partial_{S\cup\{j\}}D$.
  \item\label{lemma: partial dual properties: d}  If $S'$ is some other subset of $E$, then $\partial_{S'}\partial_S D=\partial_{S\symmdiff S'} D$.
  \item  Partial duality preserves orientability of hypermaps.
  \item\label{comment}  If $X$ is the underlying surface of $\partial_S D$, then $X$ is the underlying surface of $\partial_{E\setminus S} D$ as well.
\end{enumerate}
\end{lemma}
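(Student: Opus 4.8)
The plan is to work entirely within the permutation representation, exploiting that $\alpha = c_1\cdots c_n$ is a product of pairwise disjoint transpositions. For a subset $S\subseteq E$ I would write $\gamma_S = \prod_{i\in S} c_i$; since the $c_i$ are disjoint they commute, so $\gamma_S$ is well defined independently of the order, is itself a product of disjoint transpositions (hence an involution, $\gamma_S^2 = 1$), and $\gamma_S\gamma_{S'} = \gamma_{S'}\gamma_S = \gamma_{S\symmdiff S'}$ because the factors $c_i$ with $i\in S\cap S'$ cancel in pairs. With this notation the iterated definition collapses to $\partial_S D = (\sigma\gamma_S,\alpha,\gamma_S\varphi)$, and Theorem \ref{partial}, applied one edge at a time, guarantees that each such triple is again a genuine map, so every identity below is an identity of maps rather than merely of permutation triples.

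For (a), note $\gamma_E = \alpha$, so $\partial_E D = (\sigma\alpha,\alpha,\alpha\varphi)$; using $\sigma\alpha\varphi = 1$ gives $\sigma\alpha = \inv\varphi$ and $\alpha\varphi = \inv\sigma$, and since $\alpha$ is an involution $\alpha = \inv\alpha$, so $\partial_E D = (\inv\varphi,\inv\alpha,\inv\sigma) = D^*$ by invariant \ref{duality}. For (d) I would simply compute $\partial_{S'}\partial_S D = (\sigma\gamma_S\gamma_{S'},\alpha,\gamma_{S'}\gamma_S\varphi) = (\sigma\gamma_{S\symmdiff S'},\alpha,\gamma_{S\symmdiff S'}\varphi) = \partial_{S\symmdiff S'}D$, where the key point is that the transpositions used by $\partial_{S'}$ are the cycles of the (unchanged) second coordinate $\alpha$. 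Parts (b) and (c) then fall out as the special cases $S' = S$ (so $S\symmdiff S = \emptyset$ and $\partial_\emptyset D = D$) and $S' = \{j\}$ with $j\notin S$ (so $S\symmdiff\{j\} = S\cup\{j\}$).

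For (e) the observation is that in this framework an orientable hypermap is precisely a triple of permutations with trivial product generating a transitive group; the partial dual — given for maps in the definition and for hypermaps in remark \ref{remark51} — is again such a triple (a one-line check shows $(\sigma c_j)(\inv{c_j}\hat\alpha)(c_j\varphi) = \sigma\alpha\varphi = 1$ in the hypermap case, and transitivity is the content of Theorem \ref{partial}), hence again an orientable hypermap, so orientability is preserved automatically. For (f) I would chain (a) and (d): applying (a) to the map $\partial_S D$, whose edge set is still $E$, gives $(\partial_S D)^* = \partial_E\partial_S D$, and by (d) this equals $\partial_{E\symmdiff S}D = \partial_{E\setminus S}D$ since $S\subseteq E$. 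Because the geometric dual is the \belyi pair $(X,1/f)$ on the very same curve (invariant \ref{duality}), $(\partial_S D)^*$ shares its underlying surface with $\partial_S D$, whence $\partial_{E\setminus S}D$ does as well.

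I expect the only genuine care to lie in the bookkeeping of left- versus right-multiplication and the order of the $c_i$ in the iterated definition; once commutativity of disjoint transpositions reduces everything to the well-defined involutions $\gamma_S$, the algebraic parts (a)--(d) are forced. The conceptually loaded steps are (f), which hinges on recognising $\partial_{E\setminus S}D$ as the geometric dual of $\partial_S D$, and (e), where the subtlety is merely that orientability is built into the rotation-system model and so needs no separate verification by hand.
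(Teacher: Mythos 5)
Your proposal is correct and follows essentially the same route as the paper: the paper likewise treats parts (a)--(e) as immediate consequences of the definition (the lemma is borrowed from Chmutov et al.), and its only substantive comment is on part (f), where it uses exactly your chain $\partial_{E\setminus S}D=\partial_{E\symmdiff S}D=\partial_E\partial_S D$ together with the observation that a map and its geometric dual correspond to the \belyi pairs $(X,f)$ and $(X,1/f)$ on the same Riemann surface. The one small nit is in (e): Theorem \ref{partial} is stated and proved only for maps (where $c_j$ is a transposition), so it does not literally supply transitivity for the hypermap partial dual of remark \ref{remark51}; but since orientability, as opposed to connectedness, is automatic in the permutation-triple model, that citation is dispensable and your conclusion stands.
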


We shall comment only on part (\ref{comment}) of the lemma as other properties follow directly from the definition. For the partial dual $\partial_{E\setminus S} D$ we have
\[\partial_{E\setminus S} D=\partial_{E\symmdiff S} D=\partial_E \partial_S D.\]
Therefore, $\partial_{E\setminus S} D$ and $\partial_S D$ are dual maps and hence they are embedded on homeomorphic surfaces. Moreover, if $f$ is the clean \belyi function of $\partial_{E\setminus S} D$, then the two corresponding \belyi pairs are $(X,f)$ and $(X,1/f)$, respectively. Hence part (\ref{comment}) of the lemma can be improved slightly by noting that the underlying surfaces of $\partial_S D$ and $\partial_{E\setminus S} D$ coincide not just as topological, but as Riemann surfaces too.

\subsection{Partial duals, delta-matroids and the Galois action}\label{subsection: partial duals Galois}
Given a dessin $D=(X,f)$, the absolute Galois group acts on it and its partial duals. It appears that the relationship between the \belyi function of $D$ and $\partial_j D$ is very complicated. For if $D$ is a tree, its \belyi function is a polynomial; however, the \belyi function of $\partial_j D$, for any edge $j$, clearly is no longer polynomial. More worryingly, example \ref{exmp61} shows that the Riemann surface of $\partial_j D$ can be a point of a completely different moduli space than the one of $D$!

Nevertheless, some \emph{nice} behaviour can be observed. For example, we shall prove that $D$ always has a partial dual defined over its field of moduli by using a correspondence between delta-matroids and partial duals established in \cite[thm. 4.8]{moffat14}.

We start with a simple proposition.
\begin{prop}\label{obvious}Let $D=\per$ be a map, $E$ its set of edges and $S$ some subset of $E$. Then $\mon D$ is abelian if, and only if $\mon{\partial_S D}$ is abelian.\end{prop}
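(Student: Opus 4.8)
The plan is to reduce the statement to a one-line commutator computation, leaning on the fact (noted just before the definition of $\partial_S D$, and already exploited in the proof of Theorem~\ref{partial}) that the transpositions making up $\alpha$ pairwise commute. Write $\tau=c_{i_1}\cdots c_{i_k}$ for the product of the transpositions indexed by $S$; since the cycles of $\alpha$ are disjoint we have $c_{i_1}\cdots c_{i_k}=c_{i_k}\cdots c_{i_1}=\tau$, so that $\partial_S D=(\sigma\tau,\alpha,\tau\varphi)$. First I would strip each triple down to two generators. Because $\sigma\alpha\varphi=1$ we may set $\varphi=\inv{(\sigma\alpha)}$ and write $\mon D=\interval{\sigma,\alpha}$. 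Similarly, noting that $\tau$ is an involution commuting with $\alpha$ gives $(\sigma\tau)\alpha(\tau\varphi)=\sigma\alpha\tau^2\varphi=\sigma\alpha\varphi=1$, so the third generator of $\partial_S D$ is redundant and $\mon{\partial_S D}=\interval{\sigma\tau,\alpha}$. Since a group generated by two elements is abelian exactly when those generators commute, the whole proposition is equivalent to the assertion that $\sigma$ and $\alpha$ commute if and only if $\sigma\tau$ and $\alpha$ commute.

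The key input is $\tau\alpha=\alpha\tau$: both $\tau$ and $\alpha$ are products drawn from the pairwise-commuting family $c_1,\dots,c_n$ of disjoint transpositions, so any two such products commute. Granting this, a single computation settles both implications. If $\sigma\alpha=\alpha\sigma$, then
\[(\sigma\tau)\alpha=\sigma(\alpha\tau)=(\sigma\alpha)\tau=(\alpha\sigma)\tau=\alpha(\sigma\tau),\]
so $\sigma\tau$ and $\alpha$ commute and $\mon{\partial_S D}$ is abelian. Conversely, if $(\sigma\tau)\alpha=\alpha(\sigma\tau)$, then writing $\sigma=(\sigma\tau)\tau$ and using $\tau\alpha=\alpha\tau$ once more recovers $\sigma\alpha=\alpha\sigma$, so $\mon D$ is abelian. (The converse can equally be deduced formally from the forward direction applied to $\partial_S D$, since $\partial_S\partial_S D=D$ by Lemma~\ref{lemma: partial dual properties}.)

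I expect no genuine obstacle: the only points needing care are bookkeeping rather than substance, namely justifying that $\tau$ commutes with $\alpha$ — precisely the commutation relation that makes $\partial_j D$ well defined in Theorem~\ref{partial} — and that the face permutation may be dropped from each generating set. The one conceptual trap worth flagging is the temptation to argue that $\tau\in\mon D$; this is false in general (already for $\mon D\cong\mathbb{Z}/4$, a single $c_j$ need not be realised inside the group), and the underlying surface may even change genus under the twist. What the computation shows is the more robust statement that, whatever element $\sigma\tau$ happens to be, it still commutes with $\alpha$, and that is all that abelianness of a two-generator group demands.
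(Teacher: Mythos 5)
Your proof is correct and is essentially the paper's own argument: both reduce to the two-generator presentation $\mon D=\interval{\sigma,\alpha}$, $\mon{\partial_S D}=\interval{\sigma\tau,\alpha}$ and settle abelianness by the one-line computation exploiting that $\alpha$ commutes with the disjoint transpositions $c_j$. The only (cosmetic) difference is that you handle the product $\tau=c_{i_1}\cdots c_{i_k}$ in one step, whereas the paper first reduces to $S=\{1\}$ via Lemma~\ref{lemma: partial dual properties} and then performs the identical computation for a single $c_1$.
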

\begin{proof}By lemma \ref{lemma: partial dual properties} it is enough to consider $S=\{1\}$. Let $c_1$ be the corresponding cycle in $\alpha$. Then
\[\sigma\alpha=\alpha\sigma \iff \sigma\alpha c_1=\alpha\sigma c_1 \iff (\sigma c_1) \alpha=\alpha(\sigma c_1), \]
since $c_1$ commutes with $\alpha$.
\end{proof}

It was shown in \cite{hidalgo11}\footnote{For an alternative argument, see the discussion after proposition 3 in \cite{con_Jon_stre_wolf} as well.} than any dessin with abelian monodromy group is defined over $\mathbb Q$. Therefore the following corollary is obvious.

\begin{cor}Let $D=\per$ be a map such that $\mon D$ is abelian. Then $D$ and its partial duals are all defined over $\mathbb Q$.
\end{cor}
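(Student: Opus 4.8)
The plan is to deduce this immediately from Proposition \ref{obvious} together with the cited theorem of Hidalgo \cite{hidalgo11}. First I would observe that, by Lemma \ref{lemma: partial dual properties}(\ref{lemma: partial dual properties: c}) and the definition of $\partial_S D$ as an iterated single-edge partial dual, every partial dual of $D$ is of the form $\partial_S D$ for some subset $S\subseteq E$; in particular the case $S=\emptyset$ recovers $D$ itself. Hence it suffices to show that each such $\partial_S D$ is defined over $\mathbb Q$.

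Next I would invoke Proposition \ref{obvious}: since $\mon D$ is abelian by hypothesis, that proposition applied to the subset $S$ yields that $\mon{\partial_S D}$ is abelian for every $S\subseteq E$. Finally, the theorem of Hidalgo asserts that any dessin whose monodromy group is abelian is defined over $\mathbb Q$; applying it to each $\partial_S D$ (and to $D=\partial_\emptyset D$) completes the argument.

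I do not anticipate a genuine obstacle here, since the mathematical content is entirely supplied by the two results already established and the corollary is essentially a bookkeeping combination of them. The only point deserving a moment's care is the opening observation that the totality of partial duals of $D$ is exhausted by the maps $\partial_S D$ as $S$ ranges over the subsets of $E$; this is what allows a single application of Proposition \ref{obvious} and of Hidalgo's theorem, uniformly in $S$, to cover \emph{all} partial duals at once rather than having to treat them individually.
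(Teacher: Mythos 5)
Your proposal is correct and is exactly the argument the paper intends: the corollary is stated there as an immediate consequence of Proposition \ref{obvious} (abelianness of $\mon{D}$ is preserved under every partial dual $\partial_S D$, $S\subseteq E$) combined with Hidalgo's theorem that dessins with abelian monodromy group are defined over $\mathbb{Q}$. No gaps; your extra remark that every partial dual arises as some $\partial_S D$, with $S=\emptyset$ recovering $D$, is the same bookkeeping the paper leaves implicit.
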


\begin{rmrk}\normalfont \label{rmrk53}Proposition \ref{obvious} is no longer true if $D$ is a hypermap. For if $c$ is a non-trivial cycle in $\alpha$ which is not a transposition, then $\inv c \hat\alpha = \inv c \alpha \inv c=c^{-2}\alpha$. Furthermore, if $\mon D$ is abelian we have
\begin{eqnarray*}
(\sigma c) (\inv c\hat\alpha) &=& (\inv c\hat\alpha)(\sigma c) \iff \\
\sigma\alpha c^{-1} &=& c^{-2}\alpha\sigma c \iff \\
\sigma \inv c \alpha &=& c^{-2}\sigma c \alpha \iff \\
c^2\sigma &=& \sigma c^2.
\end{eqnarray*}
The last equality does not hold always, of course. For example, if
\[D=\big((1~2)(3~4)(5~6),(1~3~5)(2~4~6),(1~6~3~2~5~4)\big)\]
is a dessin (see figure \ref{figure18}) then $\mon D \cong \mathbb Z_6$, however for $c=(1~3~5)$ we have $\sigma c^2 \neq c^2 \sigma$.
\begin{figure}[ht]
  \centering
  \includegraphics[trim={8cm 18cm 8cm 4.45cm}]{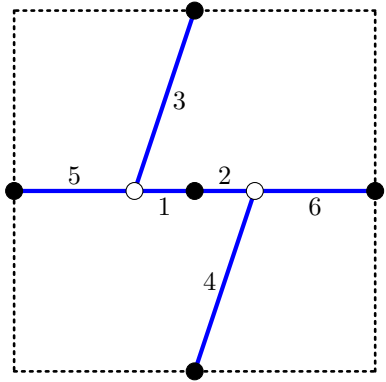}
  \caption{The dessin $D$ from remark \ref{rmrk53}.}\label{figure18}
\end{figure}
\end{rmrk}

Given a delta-matroid $\Delta (E)$ on some set $E$ with $\mathcal F$ as its collection of feasible sets, one can easily see that for some subset $S$ of $E$ the collection
$$\mathcal F \symmdiff S = \{F\symmdiff S\mid F\in\mathcal F\}$$
satisfies the symmetric axiom of definition \ref{definition: delta-matroid}. This motivates the following.

\begin{dfn}\normalfont Let $\Delta (E)$ be a delta-matroid on $E$ with $\mathcal F$ as its collection of feasible sets. Let $S$ be a subset of $E$. The delta-matroid on $E$ with $\mathcal F\symmdiff S$ as its collection of feasible sets is called \emph{the twist of} $\Delta (E)$ \emph{with respect to} $S$ and is denoted by $\Delta (E)*S$.
\end{dfn}
Similarly as before, when $D$ is a map, we shall use the notation $\Delta (D)*S$. The following lemma from \cite{moffat14} gives a correspondence between delta-matroids and partial duals.

\begin{lemma}\label{lemma: partial}Let $D$ be a map, $E$ its set of edges and $S$ some subset of $E$. Then
\[\Delta (\partial_S D)=\Delta (D)*S.\]
\end{lemma}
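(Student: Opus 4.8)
The plan is to avoid the topological connectedness condition altogether and argue purely with the permutation representation, exploiting the fact that partial duality does not change the edge involution $\alpha$. For $F\subseteq E$ write $\alpha_F=\prod_{i\in F}c_i$. The first step is to record the combinatorial feasibility criterion: $F$ is a feasible set of $D=\per$ if and only if the spanning subgraph on the edge set $F$ is a quasi-tree, that is, has a single face, which happens exactly when $\sigma\alpha_F$ consists of a single cycle on $\{1,\dots,2n\}$. This is nothing more than the standard fact that the faces of an embedded graph are the cycles of its face permutation $(\sigma\alpha_F)^{-1}$, the darts of the edges outside $F$ being kept as fixed points of $\alpha_F$; I would check that counting this way reproduces the base definition ``$X\setminus B$ connected'', the one delicate point being that absent darts must be retained (already for a single non-loop edge on the sphere, $F=\emptyset$ gives $\sigma\alpha_\emptyset=\sigma$ with two cycles, matching the fact that deleting the coedge disconnects the sphere).

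With this criterion the result is immediate and, pleasantly, requires no reduction to a single edge. Since $\partial_S D=(\sigma\alpha_S,\alpha,\alpha_S\varphi)$ retains the same edge involution $\alpha$, its edges are canonically the same transpositions $c_i$ and its delta-matroid lives on the same ground set $E$, so no relabelling is necessary. Applying the criterion to $\partial_S D$, a set $F'$ is feasible for $\partial_S D$ precisely when $(\sigma\alpha_S)\alpha_{F'}$ is a single cycle. The key algebraic point is that the $c_i$ are commuting involutions, so that $\alpha_S\alpha_{F'}=\alpha_{S\symmdiff F'}$ and hence $(\sigma\alpha_S)\alpha_{F'}=\sigma\alpha_{S\symmdiff F'}$. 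By the criterion once more, this is a single cycle if and only if $S\symmdiff F'$ is feasible for $D$, i.e. if and only if $F'\in\mathcal F\symmdiff S$. Therefore the feasible sets of $\partial_S D$ are exactly $\mathcal F\symmdiff S$, which says precisely that $\Delta(\partial_S D)=\Delta(D)*S$.

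I expect the genuine work to lie entirely in the feasibility criterion rather than in the computation, which is a two-line symmetric-difference manipulation. Concretely, the obstacle is to justify that ``$X\setminus B_F$ is connected'' is equivalent to ``$\sigma\alpha_F$ is a single cycle'', i.e. that the connected components of the complement of the chosen arcs are traced out by the cycles of $\sigma\alpha_F$. I would obtain this from the usual boundary-walk (transition system) description of the faces of a graph on a surface, or simply invoke the equivalence already built into the base-versus-quasi-tree definitions and the results of Bouchet quoted above. Once that equivalence is in hand the statement holds for every $S$ simultaneously; should one prefer to sidestep it, the same conclusion also follows by establishing the single-edge case $\Delta(\partial_j D)=\Delta(D)*\{j\}$ and then using parts (\ref{lemma: partial dual properties: c}) and (\ref{lemma: partial dual properties: d}) of Lemma \ref{lemma: partial dual properties} together with the additivity $(\mathcal F\symmdiff S)\symmdiff\{j\}=\mathcal F\symmdiff(S\cup\{j\})$ of twists to induct on $|S|$.
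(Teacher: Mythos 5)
Your proposal is correct, but it takes a genuinely different route from the paper's proof. The paper reduces to $S=\{j\}$ via Lemma \ref{lemma: partial dual properties}(\ref{lemma: partial dual properties: c}) and then runs a topological case analysis (loop versus non-loop versus pendant edge, handle added or removed, Euler's formula) to track how a base $B$ of $D$ becomes the base $B\symmdiff\{j,j^*\}$ of $\partial_j D$; you instead front-load everything into a single combinatorial feasibility criterion --- $F$ is feasible for $D=\per$ if and only if $\sigma\alpha_F$ is a single cycle on all $2n$ darts, equivalently $\partial_F D$ has exactly one vertex --- after which the lemma holds for every $S$ simultaneously by the one-line identity $(\sigma\alpha_S)\alpha_{F'}=\sigma\alpha_{S\symmdiff F'}$, valid because the $c_i$ are disjoint commuting involutions, with the ground sets canonically identified since $\partial_S D$ keeps the same $\alpha$ (and Theorem \ref{partial} guarantees $\partial_S D$ is a map, so the criterion legitimately applies to it). This buys uniformity in $S$, eliminates the case analysis, and makes transparent why partial duality acts on the delta-matroid precisely by twisting; the cost is that all of the paper's topology is compressed into the criterion, and there your fallback of ``simply invoking'' the quoted results is not actually available: Bouchet's facts cited in the paper (equicardinality and spanningness of bases, the symmetric-exchange theorem) do not contain the single-cycle statement, so the boundary-walk argument is genuinely required. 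It does go through: retaining the absent darts as fixed points of $\alpha_F$ amounts to attaching dangling half-edges to the spanning ribbon subgraph, which changes no boundary-component count, while isolated vertices and disconnected subgraphs force extra cycles of $\sigma\alpha_F$ and are thereby correctly excluded --- exactly as your $F=\emptyset$ sanity check illustrates --- and the resulting count of boundary components matches the number of components of $X\setminus B_F$. With that one lemma written out carefully, your proof is complete, and arguably cleaner than the paper's; your alternative closing route (single-edge case plus induction via Lemma \ref{lemma: partial dual properties}) is essentially the paper's reduction, so it offers no savings over the direct computation.
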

\begin{proof}It is sufficient to show the lemma for $S=\{j\}$ since the general result will then follow from lemma \ref{lemma: partial dual properties}(\ref{lemma: partial dual properties: c}).

If $j$ is in no base, then it is a contractible loop in $D$ and in $\partial_j D$ it is a pendant, i.e. an edge incident to a degree 1 vertex. In that case, the lemma follows easily.

So suppose that $B$ is a base of $D$ with $j\in B$. Moreover, suppose that $j$ is not a loop.
If $j$ is a pendant, then the lemma is again obvious. Therefore, suppose that both vertices incident to $j$ have degree at least 2.

By our construction, $j$ is a loop in $\partial_j D$. Therefore, $D/j$ is the same map as $(\partial_j D)\setminus j$. The underlying surface of $D/j$ is the surface of $D$, hence $B\setminus j$ does not disconnect it. Therefore, $B\setminus j$ is a base of $(\partial_j D)\setminus j$ as well.

Let us now adjoin the loop $j$ back to $(\partial_j D)\setminus j$. If we were forced to add a handle, then $j^*$ will not disconnect the underlying surface since it will split the new handle into two sleeves and leave the rest of the surface unaffected. Therefore, $(B\setminus j)\cup j^* = B\symmdiff \{j,j^*\}$ will be a base of $\partial_j D$. Furthermore, if $F$ is the feasible set of $\Delta (D)$ with $F=E\cap B$, then
\[F \symmdiff j = E\cap (B\symmdiff \{j,j^*\})\]
is a feasible set of $\Delta (\partial_j D)$.

If a new handle was not needed, then $\partial_j D$ and $(\partial_j D)\setminus j$ are on the same surface~$X$.
Since $(\partial_j D)\setminus j$ is a map on $X$ with at least one face, adjoining $j$ to it will clearly split some face into two new faces. Hence $j^*$ must be a contractible segment on $X$ since its endpoints are in the two faces with $j$ as a common boundary. Therefore, $B\symmdiff \{j,j^*\}$ is a base of $\partial_j D$ and, by passing to feasible sets, we conclude that $F\symmdiff j$ is a feasible set in $\Delta (\partial_j D)$, if $F$ is a feasible set in $\Delta (D)$.

Now suppose that $j$ is a loop. Since $j\in B$, it cannot be contractible. If $D$ and $\partial_j D$ are on the same surface, then, topologically, $j\in D$ and $j^*\in\partial_j D$ are the same loop. Therefore, $B\symmdiff \{j,j^*\}$ must be a base of $\partial_j D$.
Otherwise, by removing a handle, Euler's formula implies that $\partial_j D$ gained an additional face. By construction, $j$ must be on the boundary of the additional face, and at least one other face since other edges in $D$ do not contribute to the partial dual. Therefore, $j^*$ is contractible and $B\symmdiff \{j,j^*\}$ a base for $\partial_j D$.


So far we have shown that $\Delta (D)*j \subseteq \Delta (\partial_j D)$. The other inclusion is obtained by noting that if $F\in\Delta(\partial_j D)$, then
\[(F\symmdiff j) \in \Delta(\partial_j D)*j.\]
However, by using the just proven inclusion we have
\[(F\symmdiff j) \in \Delta (\partial_j\partial_j D)=\Delta (D).\]
Moreover, since $F=(F\symmdiff j)\symmdiff j$, we must have $F\in \Delta(D)*j$.\end{proof}

\begin{rmrk}\normalfont The proof of the preceding lemma is somewhat more natural in the language of ribbon graphs, as it can be seen in \cite[thm. 4.8]{moffat14}. However, in this paper, we prefer to work with maps instead.\end{rmrk}

We finish this section by demonstrating that partial duals with respect to feasible sets can be defined over their fields of moduli.

\begin{thm}\label{theorem: field of modul}Let $D$ be a clean dessin and $E$ its set of edges. Then $D$ has a partial dual which can be defined over its field of moduli.
\end{thm}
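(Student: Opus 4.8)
The plan is to reduce the statement to the sufficient criterion of Theorem \ref{theorem: bachelor}: it is enough to exhibit \emph{some} partial dual of $D$ possessing a black vertex, white vertex, or face centre that is unique for its type and degree. The cleanest way to guarantee such a distinguished feature is to produce a partial dual with exactly \emph{one} black vertex, since a solitary vertex is automatically unique for its type and degree. Note that the sentence preceding the theorem already advertises working with partial duals with respect to feasible sets, and this is precisely the mechanism I would exploit.

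First I would recall that the collection $\Delta(D)$ of feasible sets of a map is by definition non-empty, so I may fix a feasible set $F\in\Delta(D)$, i.e.\ a spanning quasi-tree of $D$. I would then form the partial dual $\partial_F D$, which is again a clean dessin by Theorem \ref{partial} together with Lemma \ref{lemma: partial dual properties}, and invoke the twist correspondence of Lemma \ref{lemma: partial}, giving $\Delta(\partial_F D)=\Delta(D)*F$. Since $F$ is itself feasible, the twist contains $F\symmdiff F=\emptyset$; hence the empty set is a feasible set of $\partial_F D$.

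The decisive translation step is the observation recorded in the remark on empty spanning quasi-trees: a map has $\emptyset$ among its feasible sets precisely when the base $E^*$ occurs, which forces the dual map to have a single face and therefore the map itself to have a single (black) vertex. Applying this to $\partial_F D$, I conclude that $\partial_F D$ is a map with exactly one black vertex. That vertex is trivially unique for its type and degree, so Theorem \ref{theorem: bachelor} applies to the clean dessin $\partial_F D$ and furnishes a \belyi pair for $\partial_F D$ defined over its field of moduli, as required. (If $\emptyset\in\Delta(D)$ already, one simply takes $S=\emptyset$, so $\partial_\emptyset D=D$ works directly.)

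The part demanding the most care — more a matter of careful bookkeeping than of genuine difficulty — is pinning down the equivalence ``$\emptyset$ feasible $\iff$ one vertex'' and confirming that $\partial_F D$ is a bona fide clean dessin to which the field-of-moduli criterion legitimately applies. The former is exactly the content of the remark identifying an empty base $B=E^*$ with $\map^*$ having a single face; the latter follows from Theorem \ref{partial} and the transitivity/well-definedness established in Lemma \ref{lemma: partial dual properties}. Once these two points are in place, the twist identity $\Delta(\partial_F D)=\Delta(D)*F$ delivers the conclusion with no further computation.
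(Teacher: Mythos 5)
Your proposal is correct and takes essentially the same route as the paper: fix a feasible set, apply the twist identity $\Delta(\partial_S D)=\Delta(D)*S$ of Lemma \ref{lemma: partial}, and then invoke the criterion of Theorem \ref{theorem: bachelor} on a partial dual with a trivially unique feature. The only difference is that you twist by $S=F$ to make $\emptyset$ feasible, yielding a partial dual with a single black vertex, whereas the paper twists by $S=E\setminus F$ to make $E$ feasible, yielding a partial dual with a single face; the two maps are geometric duals of one another (exactly as in the corollary following the theorem), so your argument is the mirror image of the paper's and equally valid.
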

\begin{proof}
Recall that by theorem \ref{theorem: bachelor} a dessin can be defined over its field of moduli if it has a black vertex, or a white vertex, of a face center which is unique for its type and degree. If $D$ has precisely one face, then that face is the unique face of some degree and therefore both $\partial_\emptyset D=D$ and $\partial_E D=D^*$ can be defined over their corresponding fields of moduli (which coincide).

Otherwise, let $F\neq E$ be a feasible set of $\Delta (D)$ and set $S=E\setminus F$. Then by lemma \ref{lemma: partial} the map $\partial_S D$ has $S\symmdiff F =E$ as a feasible set. Therefore, $E$ is a base of $\partial_S D$. Furthermore, if $X_S$ is the underlying surface of $\partial_S D$, then $X_S\setminus \partial_S D$ is connected. This implies that $\partial_S D$ has precisely one face. As before, theorem \ref{theorem: bachelor} implies that $\partial_S D$ can be defined over its field of moduli.
\end{proof}

\begin{cor}Let $D$ be a clean dessin and $\Delta (D)$ its delta-matroid. If $F$ is a feasible set of $\Delta (D)$, then both $\partial_F D$ and $\partial_{E\setminus F} D$ can be defined over their fields of moduli. Moreover, the two fields coincide.
\end{cor}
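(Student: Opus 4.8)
The plan is to read everything off Lemma \ref{lemma: partial} together with Theorem \ref{theorem: field of modul}, using the single observation that drives that theorem: a map has precisely one face exactly when its full edge set $E$ is a feasible set of its delta-matroid, and dually (by the remark on empty spanning quasi-trees) it has precisely one vertex exactly when $\emptyset$ is feasible. Since $F$ is feasible for $\Delta(D)$, both of these conditions will be produced by the two twists.

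First I would handle $\partial_{E\setminus F} D$. By Lemma \ref{lemma: partial},
\[\Delta(\partial_{E\setminus F} D)=\Delta(D)*(E\setminus F),\]
whose feasible sets are the symmetric differences $F'\symmdiff(E\setminus F)$ with $F'$ feasible for $\Delta(D)$. Taking $F'=F$ and using that $F$ and $E\setminus F$ are disjoint gives $F\symmdiff(E\setminus F)=E$, so $E$ is a feasible set of $\partial_{E\setminus F}D$; hence $\partial_{E\setminus F}D$ has exactly one face. Theorem \ref{theorem: bachelor} then applies to its unique face center, so $\partial_{E\setminus F}D$ is definable over its field of moduli. This is literally Theorem \ref{theorem: field of modul} with $S=E\setminus F$.

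Next I would treat $\partial_F D$ by the mirror computation: $\Delta(\partial_F D)=\Delta(D)*F$ contains $F\symmdiff F=\emptyset$, so $\partial_F D$ has exactly one (black) vertex. Equivalently, $\partial_F D$ and $\partial_{E\setminus F}D$ are mutually dual, since by lemma \ref{lemma: partial dual properties}(a) and (\ref{lemma: partial dual properties: d}),
\[(\partial_F D)^*=\partial_E\partial_F D=\partial_{E\symmdiff F}D=\partial_{E\setminus F}D,\]
so a one-face map is carried to a one-vertex map. Either way, Theorem \ref{theorem: bachelor} applies once more, now to the unique vertex of $\partial_F D$, giving a definition over its field of moduli.

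Finally, to see the two fields coincide I would invoke the comment following Lemma \ref{lemma: partial dual properties}: the dual maps $\partial_F D$ and $\partial_{E\setminus F}D$ sit on the \emph{same} Riemann surface $X$, with \belyi functions $f$ and $1/f$. A number field $K$ is a field of definition of $(X,1/f)$ exactly when it is one of $(X,f)$, so the two \belyi pairs have identical fields of definition and hence identical fields of moduli (the intersection of all such fields); equivalently, since the Galois action commutes with dualisation, $(\partial_F D)^\theta{}^{*}=((\partial_F D)^{*})^\theta$, so a dessin and its dual have the same stabiliser in $\absgal$ and therefore the same fixed field. I do not expect a genuine obstacle: the argument is a dictionary translation through Lemma \ref{lemma: partial}, and the only point needing care is the bookkeeping of the equivalences ``$E$ feasible $\Leftrightarrow$ one face'' and ``$\emptyset$ feasible $\Leftrightarrow$ one vertex,'' together with checking that the degenerate case $F=E$ is subsumed (there $\partial_{E\setminus F}D=D$ still has $E$ feasible and $\partial_F D=D^*$ still has $\emptyset$ feasible, so both conclusions persist).
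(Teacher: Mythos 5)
Your proof is correct and follows essentially the same route as the paper: $\partial_{E\setminus F}D$ is handled exactly as in Theorem \ref{theorem: field of modul} (twisting by $E\setminus F$ makes $E$ feasible, producing a one-face map to which Theorem \ref{theorem: bachelor} applies), while $\partial_F D=\partial_E\partial_{E\setminus F}D$ is treated as its geometric dual, with the coincidence of the two fields of moduli deduced from the \belyi pairs $(X,f)$ and $(X,1/f)$ sharing all fields of definition. Your extra direct argument for $\partial_F D$ via the feasibility of $\emptyset$ and the one-vertex characterisation is sound, but redundant once duality is invoked.
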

\begin{proof}The case for $\partial_{E\setminus F} D$ was discussed in the proof the previous theorem. The second case follows from lemma \ref{lemma: partial dual properties} (\ref{lemma: partial dual properties: d}), that is
$$\partial_E (\partial_{E\setminus F} D)=\partial_F D.$$
Since the fields of definition of a map and its dual map coincide, and both maps can be defined over their field of moduli, then the fields of moduli coincide as well.
\end{proof}

\section{Maps, their partial duals and tropical curves}\label{section: tropical}
In this section we informally comment on a simple relationship between the monodromy groups of dessins, partial duals and tropical curves. To the best knowledge of the author, this relationship has not been noted in the literature yet. We do not assume any knowledge of tropical geometry, however the reader is referred to \cite{maclagan12} for an introduction.

Let $D=\per$ be a clean dessin with
\[\sigma = v_1\cdots v_j,\ \alpha=c_1\cdots c_n,\ \varphi=f_1\cdots f_k,\]
and consider the planar graph $G$ obtained from the triple $\per$ in the following way.
    \begin{itemize}
        \item Mark the integer points in the segment $[0,n+1]$.
        \item Place $j$ vertices, one for each cycle in $\sigma$, vertically above 0.
        \item To a vertex $i$ attach an open segment of length 1 and label it with the cycle~$v_i$.
        \item Choose a cycle $(p ~ q)$ in $\alpha$.
            \begin{itemize}
                \item If $p$ and $q$ are in the cycles $v_p$ and $v_q$, respectively, above 1 join the edges with labels $v_p$ and $v_q$ into a single edge of length 1/2, so that a degree 3 vertex above 1 is formed. Label the edge with the cycle $\sigma_p\sigma_q (p~q)$.
                \item If $p$ and $q$ are in the same cycle, say $v_r$,
                    above 1 split the edge with label $v_r$ into two edges of length 1/2, so that a degree 3 vertex above 1 is formed. Label the two edges with the cycles in $\sigma_r (p~ q)$.
                \item Extend all other edges so that their ends are above 3/2.
            \end{itemize}
        \item Repeat the previous step until all the cycles of $\alpha$ are exhausted. Above $n+1$ there are $k$ vertices, one for each cycle of $\varphi$. The edges incident with the final vertices have labels corresponding to the cycles in $\inv\varphi$.
    \end{itemize}
Planar graphs obtained in this fashion are called \emph{monodromy graphs} \cite{cav_john_mark10, johnson12}. Let us look at an example.
\begin{exmp}\normalfont \label{example: monodromy graph 1}Let $D=\per$ be the map $B$ from figure \ref{figure10}. It can be represented by the triple
\[\big((1~3~5~7~8~6~2~4),(1~2)(3~4)(5~6)(7~8),(1~6~3~2~4)(5~8)(7)\big).\]
Therefore, above 0 we should have one vertex, and above 5 we should have three vertices. A monodromy graph obtained by multiplying $\sigma$ in the order $(1~2)$, $(3~4)$, $(5~6)$ and $(7~8)$ is given in figure \ref{figure19}.
\begin{figure}[ht]
  \centering
  \includegraphics[trim={5.65cm 18.5cm 3cm 4.5cm},scale=.75]{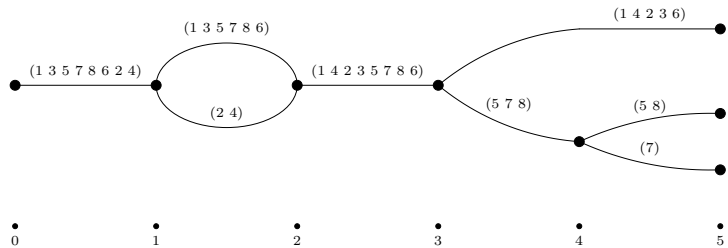}
  \caption{A monodromy graph for the map $D=\per$ from example \ref{example: monodromy graph 1}.}\label{figure19}
\end{figure}

Multiplying $\sigma$ with the cycles of $\alpha$ in a different order may produce a different monodromy graph. For example, if we multiply in the order $(1~2)$, $(5~6)$, $(3~4)$, $(7~8)$, the resulting monodromy graph shown in figure \ref{figure20} will not be isomorphic to the previous one since it will have a cycle of length 3.
\begin{figure}[ht]
  \centering
  \includegraphics[trim={5.65cm 18cm 3cm 4.8cm},scale=.75]{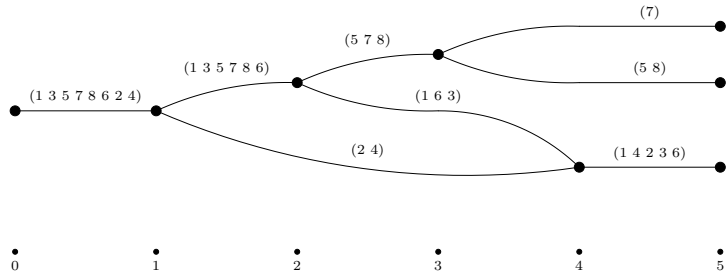}
  \caption{A monodromy graph for the map $D=\per$ from example \ref{example: monodromy graph 1} not isomorphic to the monodromy graph in figure \ref{figure19}.}\label{figure20}
\end{figure}
\end{exmp}

Irregardless of the order in which we multiply the cycles of $\sigma$ with the cycles of $\alpha$, monodromy graphs capture all of the information contained in the passport of a clean dessin $D$. Clearly the number and the degrees of black vertices and face centers correspond to the number of vertices and the lengths of the labels of edges above $0$ and $n+1$, and the genus of $D$ corresponds to the genus of the graph, which is defined as the first Betti number of the graph (this fact is a simple consequence of the handshaking lemma). Moreover, the vertices of the graphs correspond precisely to the partial duals of $D$ and two trivalent vertices $v$ and $w$ are adjacent if, and only if $\partial_j v=w$ or $\partial_j w=v$ for some edge $j$. Furthermore, monodromy graphs transfer dessins into the realm of tropical geometry.

\begin{dfn}\normalfont An \emph{abstract tropical curve} is a connected graph without vertices of degree 2 and with edges decorated by the elements of the set $\left\langle 0,\infty\right]$. The decorations on the edges are called \emph{lengths}. Edges incident to degree 1 vertices have length $\infty$ and all other edges have finite length.
\end{dfn}

It is easy to see how to pass from a clean dessin $D$ to an abstract tropical curve: first form a monodromy graph for $D$ and decorate each edge with the length of its corresponding cycle. Finally, decorate the edges incident to degree 1 vertices with~$\infty$. Tropical curves obtained in this way capture most information contained in the passport, and since they depend only on the monodromy group of the dessin, the following is clear:
\begin{thm}Let $D$ and $D'$ be clean dessins and $\mathcal T$ and $\mathcal T'$ the sets of abstract tropical curves obtained from the monodromy graphs of $D$ and $D'$, respectively. If $D$ and $D'$ are conjugate, then any two curves $T\in \mathcal T$ and $T'\in\mathcal T'$ have
\begin{itemize}
  \item the same number of finite edges and the same number of infinite edges.
  \item The same number of degree 3 vertices.
  \item The same \emph{genus}, which is defined as the genus of the underlying monodromy graph. In particular, if $D\simeq D'$ is a tree, then $T$ and $T'$ are tropical trees.
\end{itemize}
\end{thm}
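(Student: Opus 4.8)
The plan is to reduce all three assertions to the passport of $D$, which is a conjugacy invariant by Invariant~\ref{passport}. Writing $\sigma=v_1\cdots v_j$, $\alpha=c_1\cdots c_n$ and $\varphi=f_1\cdots f_k$, the integers $j$, $n$ and $k$ are precisely the numbers of parts of $\lambda_\sigma$, $\lambda_\alpha$ and $\lambda_\varphi$. I would show that each quantity in the statement is a function of the triple $(j,n,k)$ alone, independent of the order in which the cycles of $\alpha$ are multiplied into $\sigma$; the theorem then follows immediately, since conjugate dessins share $(j,n,k)$.

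First I would count vertices by degree in an arbitrary $T\in\mathcal T$. By construction the $j$ vertices above $0$ and the $k$ vertices above $n+1$ each carry a single incident edge and so have degree $1$, whereas each of the $n$ multiplication steps---one per cycle of $\alpha$---creates exactly one vertex of degree $3$, and no vertex of any other degree ever appears. Hence $T$ has exactly $j+k$ vertices of degree $1$ and exactly $n$ of degree $3$; in particular $T$ has no degree $2$ vertex and is a genuine abstract tropical curve, and the count of degree $3$ vertices is settled.

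Next I would pass to edges. The infinite edges are exactly those meeting a degree $1$ vertex, and each degree $1$ vertex has a unique incident edge; as long as $n\ge 1$ the connected graph $T$ contains a degree $3$ vertex, so no edge joins two degree $1$ vertices, and the number of infinite edges is therefore $j+k$. The handshaking lemma gives $|E(T)|=\frac{1}{2}\big((j+k)+3n\big)$, so the number of finite edges is $\frac{1}{2}\big(3n-(j+k)\big)$, and the genus, defined as the first Betti number, is
\[
|E(T)|-|V(T)|+1=\frac{1}{2}\big(3n+(j+k)\big)-\big(n+j+k\big)+1=1+\frac{1}{2}\big(n-j-k\big).
\]
Applying Euler's formula $j-n+k=2-2g$ to the map $D$ (which has $j$ black vertices, $n$ edges and $k$ faces) identifies this with the genus $g$ of $D$, exactly as asserted before the theorem.

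Finally, since $j+k$, $n$, $\frac{1}{2}\big(3n-(j+k)\big)$ and $1+\frac{1}{2}(n-j-k)$ depend only on $(j,n,k)$, they are constant across $\mathcal T$ and, by the same argument, across $\mathcal T'$. As $D\simeq D'$, Invariant~\ref{passport} yields $\lambda_\sigma=\lambda_{\sigma'}$, $\lambda_\alpha=\lambda_{\alpha'}$ and $\lambda_\varphi=\lambda_{\varphi'}$, so $D'$ carries the same triple $(j,n,k)$ and any $T\in\mathcal T$, $T'\in\mathcal T'$ agree on all three quantities; the tree case is the specialisation $g=0$, which forces the first Betti number to vanish. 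The one genuinely delicate step is the connectivity of the monodromy graph, which underlies both the Betti formula $b_1=|E(T)|-|V(T)|+1$ and the matching of infinite edges with degree $1$ vertices; I would deduce it from the transitivity of $\langle\sigma,\alpha\rangle$ (equivalently $\mon D$) on the darts, ensuring that no edge has two degree $1$ endpoints outside the degenerate edgeless case $n=0$.
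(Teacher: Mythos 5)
Your proposal is correct and takes essentially the same approach as the paper, which justifies the theorem by observing that the relevant counts depend only on the passport (a Galois invariant, Invariant~\ref{passport}) and that the genus equals the first Betti number via the handshaking lemma. Your explicit reduction of all four quantities to $(j,n,k)$, together with the connectivity argument from the transitivity of $\mon{D}$, merely fills in details the paper leaves as ``clear.''
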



The invariants above most likely do not improve on the already known invariants. However, they may serve as a motivation for studying tropical curves in the context of the theory of dessins d'enfants.

\subsection*{Acknowledgements}The author would like to thank the organisers and participants of the SIGMAP14 conference\footnote{5th Workshop SIGMAP - Symmetries In Graph, Maps And Polytopes.
Sponsored by the Open University, London Mathematical Society and British Combinatorial Committee. Dates: 7\textsuperscript{th} - 11\textsuperscript{th} July 2014. Location: ELIM Conference Centre, West Malvern, U.K. \url{http://mcs.open.ac.uk/SIGMAP/}} for the opportunity to present a talk on which this work is based, and for the lovely and informative presentations and discussions throughout. The author is also grateful for the invaluable guidance provided by his PhD advisor Prof. Alexandre Borovik.

%
%
%

\end{document}